\documentclass[10pt, reqno]{amsart}
\usepackage[utf8]{inputenc}

\usepackage{graphicx}
\usepackage{comment}
\usepackage{fourier}
\usepackage[T1]{fontenc}
\usepackage[margin=1in]{geometry}
\usepackage{parskip}
\usepackage{amsthm}
\usepackage{amsmath}
\usepackage{amssymb}
\usepackage{mathtools}
\usepackage{apptools}
\usepackage{setspace}
\usepackage[
backend=biber,
style=numeric,
sorting=nyt,
maxnames=10,
giveninits=true
]{biblatex}
\addbibresource{ref.bib}

\newcommand {\R}{\mathbb{R}}

\newcommand {\Z}{\mathbb{Z}}
\newcommand {\N}{\mathbb{N}}

\newcommand {\C}{\mathbb{C}}

\newcommand{\la}{\lambda}
\newcommand{\re}{\operatorname{Re}}
\newcommand{\im}{\operatorname{Im}}
\newcommand{\sumast}{\mathop{\sum\nolimits^{\mathrlap{\ast}}}}

\newcommand{\Mod}[1]{\ (\mathrm{mod}\ #1)}
\newtheorem{thm}{Theorem}[section]

\newtheorem{lemma}[thm]{Lemma}

\newtheorem*{remark}{Remark}
\newtheoremstyle{named}{}{}{\itshape}{}{\bfseries}{.}{.5em}{\thmnote{#3}}
\theoremstyle{named}

\linespread{1.3}
\setlength\parindent{24pt}

\AtAppendix{\counterwithin{lemma}{section}}

\usepackage{multicol}
\usepackage{xcolor}
\usepackage{hyperref}
\hypersetup{
    colorlinks,
    linkcolor={red!50!black},
    citecolor={blue!50!black},
    urlcolor={blue!80!black}
}

\title{GL(2) Weyl Bound via a multiplicative character delta method}
\author{Wing Hong Leung}
\date{}

\begin{document}

\maketitle

\begin{abstract}
We use a trivial delta method with multiplicative characters for congruence detection to prove the Weyl bound for GL(2) in the $t$-aspect for a holomorphic or Hecke-Maass cusp form of arbitrary level and nebentypus. This parallels the work of Aggarwal \cite{taspecttrivial} in 2018, with the difference being the multiplicative character has a more natural connection to the twisted $L$-function. This provides another viewpoint to understand and explore the trivial and other delta methods.
\end{abstract}

\section{Introduction and Statement of Results}

Let $f$ be a fixed level $M$ Hecke cusp form of weight $k$ or Laplace eigenvalue $\frac{1}{4}+r^2$ with nebetypus $\psi$, and let $L(s,f)$ be its associated $L$-function. By the Phragmen-Lindel\"of principle, we have the convexity bound \begin{align*}
    L\left(\frac{1}{2}+it,f\right)\ll_{f,\epsilon} t^{\frac{1}{2}+\epsilon}
\end{align*}
for any $\epsilon>0$. Any bound of the form \begin{align*}
    L\left(\frac{1}{2}+it,f\right)\ll_{f,\epsilon} t^{\frac{1}{2}-\delta}
\end{align*}
for some $\delta>0$ is called a subconvexity bound, and the current best bound is the Weyl-type bound, saying \begin{align*}
    L\left(\frac{1}{2}+it,f\right)\ll_{f,\epsilon} t^{\frac{1}{3}+\epsilon}.
\end{align*}
This Weyl-type bound appears to be a natural barrier to many boundary problems.

This problem has been studied extensively and the Weyl bound has been established by many people. Good \cite{Good} was the first one to establish such a bound for the full modular group with tools from the spectral theory. Later Jutila \cite{jutilafirst} used Farey fractions and Voronoi summations to get the same result, which was then extended by Meurman \cite{meurman} to cover the Maass form case. Afterwards, Jutila \cite{jutilasecond} extended Good's proof to cover the Maass form case. Recently, many people have established results on this problem, with various constraints on the levels, see for example \cite{paperAc, paperAg, paperB}. As better machinery gets developed, these constraints, which are believed to be a mere annoyance, are gradually relaxed. In 2018 Aggarwal \cite{taspecttrivial} established the Weyl bound for holormorphic or Hecke-Maass cusp form of arbitrary level and nebentypus.

In \cite{taspecttrivial}, Aggarwal used the trivial delta method \cite[Lemma 1.1]{taspecttrivial} stated as follows. \begin{lemma}
    Let $V$ be a smooth real valued function, compactly supported inside $\R^+$ such that $V$ has bounded derivatives and $\int_0^\infty V(x)dx=1$. Let $X>1$ and $q\in\N$ be such that $q>X^{1+\epsilon}$ for some $\epsilon>0$. Then \begin{align*}
        \delta(n=0)=\frac{1}{q}\sum_{\alpha\Mod{q}}e\left(\frac{n\alpha}{q}\right)\int_0^\infty V(x)e\left(\frac{nx}{X}\right)dx+O_A\left(X^{-A}\right)
    \end{align*}
    for any $A>0$ and here $e(x)=e^{2\pi ix}$.
\end{lemma}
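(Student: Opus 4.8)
The plan is to evaluate the right-hand side directly, using orthogonality of additive characters modulo $q$ together with the rapid decay of the Fourier transform of the fixed compactly supported function $V$.

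First I would apply the standard orthogonality relation $\frac{1}{q}\sum_{\alpha\Mod{q}}e(n\alpha/q)=\delta(q\mid n)$, which collapses the sum over $\alpha$ and shows that the main term on the right-hand side equals $\delta(q\mid n)\int_0^\infty V(x)e(nx/X)\,dx$. It then remains to compare this quantity with $\delta(n=0)$, and I would do so by splitting into the cases $n=0$ and $n\neq 0$.

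If $n=0$, then $q\mid n$ and the integral is $\int_0^\infty V(x)\,dx=1$ by hypothesis, so the main term is exactly $1=\delta(n=0)$ with no error. If $n\neq 0$, the main term vanishes unless $q\mid n$; but in that case $|n|\geq q>X^{1+\epsilon}$, so $|n|/X>X^\epsilon$. For such $n$ I would bound the integral by repeated integration by parts: since $V$ is smooth, compactly supported in $\R^+$, and has bounded derivatives, for every integer $j\geq 0$ one has $\int_0^\infty V(x)e(nx/X)\,dx\ll_j (X/|n|)^j\ll_j X^{-j\epsilon}$. Given any $A>0$, choosing $j>A/\epsilon$ makes this $O_A(X^{-A})$, which is the claimed error term.

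I do not expect any genuine obstacle here; the lemma is elementary. The only point that requires a moment's care is that the integration-by-parts step delivers a saving of $X^{-\epsilon}$ per step rather than $X^{-1}$, so one must iterate a number of times depending on $A$ and the fixed $\epsilon$ in order to beat an arbitrary fixed negative power of $X$; this is also the sole place where the hypothesis $q>X^{1+\epsilon}$ is used.
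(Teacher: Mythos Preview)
Your proof is correct and follows exactly the idea the paper sketches immediately after stating the lemma: the character sum detects $q\mid n$, and for $n\neq 0$ with $q\mid n$ the condition $q>X^{1+\epsilon}$ forces $|n|/X>X^{\epsilon}$, whence repeated integration by parts against the compactly supported weight $V$ gives the $O_A(X^{-A})$ error. The paper does not give a formal proof of this lemma (it is quoted from \cite{taspecttrivial}), but your argument is precisely the intended one and there is nothing to add.
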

Other than this problem, this delta method and its variants have also been successfully applied to achieve results on related problems, see for example \cite{besseldelta, burgesstrivial}.

The idea behind the delta method is as follows. For $q>X^{1+\epsilon}$, \begin{align*}
    \delta(n=0)=\delta(q|n)\delta(|n|<X),
\end{align*}
and one uses additive characters to detect the divisibility condition $\delta(q|n)$ and the $x$-integral to detect $\delta(|n|<X)$. However one can also detect the divisibility condition by multiplicative characters, giving us the following variant of the trivial delta method.

\begin{lemma}\label{trivialdeltalemma}(Trivial Delta Method with Multiplicative Characters)
    Let $V\geq 0$ be a fixed smooth function compactly supported on $\R^+$. Let $n,r,p,N\in\N, K>1$ such that $n,r\sim N, pK>N^{1+\epsilon}$. We have for $p\nmid r$,
    \begin{align*}
        \delta(n=r)=&\frac{1}{\phi(p)}\sum_{\chi(p)}\chi(n)\overline{\chi}(r)\int_0^\infty V\left(\nu\right)\left(\frac{n}{r}\right)^{iK\nu}d\nu+O\left(N^{-A}\right)
    \end{align*}
    for any $A>0$.
\end{lemma}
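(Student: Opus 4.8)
The plan is to detect the equation $n = r$ in two stages, mirroring the additive construction but replacing additive characters by multiplicative ones. Since $n, r \sim N$ and $pK > N^{1+\epsilon}$, the condition $n = r$ is equivalent to the conjunction of the congruence $n \equiv r \pmod p$ and the archimedean closeness $|\log(n/r)| < K^{-1}N^{\epsilon/2}$ (say). First I would use the orthogonality of Dirichlet characters mod $p$: for $p \nmid r$, the quantity $\frac{1}{\phi(p)}\sum_{\chi(p)} \chi(n)\overline{\chi}(r)$ equals $\delta(n \equiv r \bmod p)$ exactly. Second, I would argue that the $\nu$-integral $\int_0^\infty V(\nu)(n/r)^{iK\nu}\,d\nu$ acts as an approximate indicator of $n = r$ among those pairs already satisfying the congruence.

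The key computation is the following. Write $n/r = e^{u}$ with $u = \log(n/r)$, so the integral becomes $\widehat{V}(-Ku/2\pi) = \int_0^\infty V(\nu) e^{iK\nu u}\,d\nu$, essentially the Fourier transform of the fixed Schwartz-class (compactly supported smooth) function $V$ evaluated at scale $Ku$. When $n = r$ we have $u = 0$ and the integral equals $\int_0^\infty V(\nu)\,d\nu$, a nonzero constant; one should normalize $V$ so this equals $1$, or absorb the constant. When $n \neq r$ but $n \equiv r \pmod p$, we have $|n - r| \geq p$, hence $|u| = |\log(n/r)| \gg p/N \gg N^{\epsilon}/K$, so $K|u| \gg N^{\epsilon}$. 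Since $V$ is smooth and compactly supported in $\R^+$, repeated integration by parts in $\nu$ shows $\widehat{V}(-Ku/2\pi) \ll_A (K|u|)^{-A} \ll_A N^{-A\epsilon}$ for any $A$. Summed trivially over the $O(N/p) + O(1)$ admissible values this contributes $O(N^{-A})$ after renaming $A$, which is the claimed error term.

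The main obstacle — really the only subtle point — is bookkeeping the normalization and the ranges: one must check that the only term surviving with a non-negligible contribution is $n = r$ itself (which forces $V$'s integral to be the main term), and that the tail estimate from integration by parts is uniform in the relevant parameters. Since $n, r \sim N$ and $n, r$ range over a bounded dyadic block, the separation $|u| \gg p/N$ for distinct congruent pairs is clean, and the constraint $pK > N^{1+\epsilon}$ is exactly what converts this separation into the decay $K|u| > N^{\epsilon}$ needed to run integration by parts. The hypothesis $p \nmid r$ is used solely to make the character sum a clean indicator (otherwise $\chi(r) = 0$ for non-principal $\chi$ and the orthogonality identity degenerates). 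I would also remark that unlike the additive version, no smooth weight on the "divisibility" side is needed because multiplicative characters detect congruences exactly.
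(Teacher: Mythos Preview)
Your proposal is correct and follows essentially the same approach as the paper: orthogonality of Dirichlet characters mod $p$ detects $n\equiv r\pmod p$, repeated integration by parts in $\nu$ gives decay $\ll_j (K|\log(n/r)|)^{-j}\ll_j (N/K|n-r|)^j$, and the hypothesis $pK>N^{1+\epsilon}$ forces the surviving congruence class to collapse to $n=r$. Your observation about the normalization $\int V=1$ is a point the paper leaves implicit.
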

\begin{proof}
    Repeated integration by parts on the $\nu$-integral gives us for any $j\in\N$, \begin{align*}
        \int_0^\infty V\left(\nu\right)\left(\frac{n}{r}\right)^{iK\nu}d\nu\ll_j \left(K\left(\log n-\log r\right)\right)^{-j}\ll_j \left(\frac{N}{K|n-r|}\right)^j.
    \end{align*}
    One saves an arbitrary amount unless $|n-r|\ll \frac{N^{1+\epsilon}}{K}$. The sum over $\chi$ mod $p$ detects $n\equiv r\Mod{p}$ as $p\nmid r$. The condition $pK>N^{1+\epsilon}$ guarantees $|n-r|> \frac{N^{1+\varepsilon}}{K}$ unless $n=r$.
\end{proof}

Using this delta method, we prove the Weyl bound for the GL(2) $L$-function in the $t$-aspect.

\begin{thm}\label{thm1}
    Let $f$ be a level $M$ Hecke cusp form of weight $k$ or Laplace eigenvalue $\frac{1}{4}+r^2$ with nebetypus $\psi$ and $L(s,f)$ be its associated $L$-function. Then \begin{align*}
        L\left(\frac{1}{2}+it,f\right)\ll_{f,\epsilon} t^{\frac{1}{3}+\epsilon}
    \end{align*}
    for any $\epsilon>0$.
\end{thm}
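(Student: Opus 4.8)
The plan is to run the standard amplification-free strategy for $t$-aspect subconvexity, but feeding the multiplicative-character delta method (Lemma~\ref{trivialdeltalemma}) into the place where one usually invokes the circle method. First I would use the approximate functional equation to reduce Theorem~\ref{thm1} to a bound on the dyadic pieces
\begin{align*}
 S(N)=\sum_{n\sim N}\lambda_f(n)\,n^{-it}V\!\left(\frac nN\right),\qquad N\ll t^{1+\epsilon},
\end{align*}
the critical range being $N\asymp t$, where it suffices to show $S(N)\ll N^{1/2}t^{1/3+\epsilon}$. Writing $S(N)=\sum_{n,r}\lambda_f(n)n^{-it}V(n/N)\,\delta(n=r)\,U(r/N)$ for a suitable smooth $U$, and choosing a prime $p\asymp t^{2/3}$ together with $K\asymp t^{1/3}$ so that $pK\gg N^{1+\epsilon}$, Lemma~\ref{trivialdeltalemma} turns this into
\begin{align*}
 S(N)=\frac1{\phi(p)}\sum_{\chi\ (p)}\int_0^\infty V_0(\nu)\,A_\chi(\nu)\,B_\chi(\nu)\,d\nu+O\!\left(\frac{N^{1+\epsilon}}p\right),
\end{align*}
where (after splitting the $t$-oscillation between the two variables to lower the conductor) $A_\chi(\nu)$ is a fragment of the twisted $L$-function $L(s,f\otimes\chi)$ and $B_\chi(\nu)$ a fragment of $L(s,\bar\chi)$. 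This is exactly the ``natural connection to the twisted $L$-function'' advertised in the abstract: the two inner sums are now handed over to the functional equations that actually exist for them. The error term $O(N^{1+\epsilon}/p)$ is the $p\mid n$ diagonal, which for $p\asymp t^{2/3}$ is $\ll t^{1/3+\epsilon}$; the principal character contributes only a harmless main/negligible term after the computation below.

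Next I would process the two sums by their respective functional equations. Since $B_\chi$ has length $N$ larger than the square root of the conductor of $L(s,\bar\chi)$, Poisson summation (i.e.\ the $\mathrm{GL}(1)$ functional equation) collapses it to a dual sum of only $O(t^\epsilon)$ terms, with a Gauss factor $\tau(\bar\chi)$ of size $\sqrt p$. Since $A_\chi$ has length $N$ smaller than the square root of the conductor of $L(s,f\otimes\chi)$, $\mathrm{GL}(2)$ Voronoi summation (applied to the level-$M$ form $f$ via the Gauss-sum expansion $\chi(n)=\tau(\bar\chi)^{-1}\sum_a\bar\chi(a)e(an/p)$) dualizes it to a sum $\sum_{m\asymp M^*}\lambda_f(m)\bar\chi(m)\,G(m)$ of length $M^*\asymp p^2t^2/N$, where $G$ is a Bessel-type transform that I would analyze by stationary phase (it is supported, up to negligible error, on $m\asymp M^*$, of size $O(1)$, and oscillates like $m^{i\tau}$). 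Substituting these and executing the sum over $\chi$ — opening both Gauss sums and using orthogonality — I expect the character sum to collapse to a Kloosterman sum $S(1,mr;p)$ (equivalently, after Poisson on the $r$-variable, to the additive character $e(\overline{\tilde r}\,m/p)$ together with a negligible ``$+1$'' main term), leaving
\begin{align*}
 S(N)\approx \frac1{p^2}\sum_{\tilde r\ll t^\epsilon}\int_0^\infty V_0(\nu)\sum_{m\asymp M^*}\lambda_f(m)\,\mathcal W(m,\tilde r,\nu)\,e\!\left(\frac{\overline{\tilde r}\,m}p\right)d\nu,
\end{align*}
$\mathcal W$ being an explicit oscillatory weight built from $G$ and the Poisson transform of $U$.

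For the endgame I would apply Cauchy--Schwarz in $m$ to remove $\lambda_f(m)$, bounding $\sum_{m}|\lambda_f(m)|^2$ trivially by $M^{*\,1+\epsilon}$ and opening the square in the complementary factor; executing the resulting $m$-sum by Poisson summation modulo $p$ produces the expected diagonal main term plus an average of products of Kloosterman sums, which I would evaluate by the Weil (Deligne) bound for the associated complete algebraic exponential sums, reserving a separate treatment for the finitely many degenerate configurations. The $\nu$-integral is $O(1)$ in length with phase of size $\asymp K$, so stationary phase yields a further $K^{-1/2}$; the ``shifted'' pairs in the short dual variable $\tilde r$ are pinned down by the same stationary-phase analysis; and the contribution of small $m$ from the Voronoi step is disposed of by the usual decay estimates for the Bessel kernels. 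Collecting the powers of $p$, $K$, $N$, $M^*$ and optimizing with $p\asymp t^{2/3}$, $K\asymp t^{1/3}$ should give $S(N)\ll N^{1/2}t^{1/3+\epsilon}$, hence the theorem. The hard part will be precisely this last step: extracting genuine square-root cancellation from the length-$M^*$ sum while keeping every error term — the near-diagonal in $\tilde r$, the degenerate exponential sums, the $\nu$-integral, and the $p\mid n$ diagonal — strictly under $t^{1/3+\epsilon}$; a secondary technical point is running the $\mathrm{GL}(2)$ Voronoi formula and the Gauss-sum manipulations in full generality of level $M$ and nebentypus $\psi$, where $\chi$ can interact with the level and the conductor of $f\otimes\chi$ must be tracked carefully, which is the main source of extra bookkeeping compared with the classical full-level case.
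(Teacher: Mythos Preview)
Your plan diverges from the paper in several structural ways --- a single prime $p\asymp t^{2/3}$ instead of an average over $p\sim P$, the $t$-oscillation placed on the $\mathrm{GL}(2)$ side rather than the $\mathrm{GL}(1)$ side, and parameters $(p,K)\asymp(t^{2/3},t^{1/3})$ instead of $(P,K)\asymp(t^{1/2},t^{2/3})$ --- and one of these differences is fatal.

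With your choices, the $\mathrm{GL}(1)$ dual sum has length $pK/N\asymp 1$, so after summing over $\chi$ you arrive at
\[
S(N)\;\approx\;\frac{N}{\sqrt{pK}}\sum_{\tilde r\ll t^{\epsilon}}\ \sum_{m\asymp M^{*}}\frac{\lambda_f(m)}{\sqrt{m}}\,e\!\left(\frac{\overline{\tilde r}\,m}{p}+\phi(m,\tilde r)\right)W,
\qquad M^{*}=\frac{p^{2}t^{2}}{N}\asymp t^{7/3}.
\]
The trivial bound here is $\sqrt{N}\cdot\sqrt{M^{*}}=pt\asymp t^{5/3}$. Now apply Cauchy--Schwarz in $m$: the second factor is $\sum_{m}|\sum_{\tilde r}\cdots|^{2}$, and opening the square gives at most $O(t^{\epsilon})$ pairs $(\tilde r_1,\tilde r_2)$. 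The diagonal $\tilde r_1=\tilde r_2$ alone contributes $\asymp M^{*}$, and Poisson in $m$ on the off-diagonal only produces a congruence condition modulo $p$ (the additive character is \emph{linear} in $m$, so no Kloosterman sums arise here and the Weil bound is irrelevant). The upshot is $|S(N)|^{2}\ll \frac{N^{2}}{pK}\cdot M^{*}\,t^{\epsilon}=p^{2}t^{2+\epsilon}$, i.e.\ $S(N)\ll t^{5/3+\epsilon}$ --- exactly the trivial bound. Cauchy--Schwarz saves nothing because there is no genuine family inside the square: a single prime and a dual $\tilde r$-sum of length $O(t^{\epsilon})$ give you essentially one term.

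This is precisely what the paper's setup is engineered to avoid. By moving the $t$-oscillation to the $\mathrm{GL}(1)$ side, the dual $r$-sum has length $Pt/N\asymp t^{1/2}$; by averaging over primes $p\sim P\asymp t^{1/2}$, there is an additional sum of length $t^{1/2}$; and both are kept \emph{inside} the square. After Cauchy--Schwarz and Poisson in the $\mathrm{GL}(2)$ variable, one then has pairs $(p_1,p_2,r_1,r_2)$ with a congruence modulo $p_1p_2$, and the off-diagonal $p_1\neq p_2$ is exactly where the extra $\sqrt{P}$ (and the $K^{1/4}$ from the resulting integral) is won. Your proposal has no analogue of this step. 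To repair it you would at minimum need to reinstate an average over moduli and rethink the placement of the $t$-oscillation and the sizes of $p,K$ so that the inner sum after Cauchy--Schwarz is long; doing so leads you essentially back to the paper's scheme with $P\asymp t^{1/2}$, $K\asymp t^{2/3}$. (A secondary point: the paper deliberately replaces Voronoi by the functional equations of $L(s,\chi)$ and $L(s,f\otimes\chi)$ --- that is its advertised novelty --- whereas you invoke Voronoi; this is not a mathematical error, but it misses the point of the method.)
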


In the process, we prove the following theorem as an intermediate step.

\begin{thm}\label{thm2}
    Let $f$ be a level $M$ Hecke cusp form of weight $k$ or Laplace eigenvalue $\frac{1}{4}+r^2$ with nebetypus $\psi$ and $\la_f(n)$ be its Hecke eigenvalues. Let $V$ be a $t^\varepsilon$-inert function with a fixed compact support on $\R^+$. Then for $N<t^{1+\epsilon}$, \begin{align*}
        S(N):=\sum_n\la_f(n)n^{-it}V\left(\frac{n}{N}\right)\ll_{f,\epsilon} \min\left\{Nt^\epsilon,\sqrt{N}t^{\frac{1}{3}+\epsilon}\right\}.
    \end{align*}
\end{thm}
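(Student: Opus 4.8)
The first bound, $S(N)\ll_{f,\epsilon}Nt^{\epsilon}$, is immediate from the Rankin--Selberg estimate $\sum_{n\sim N}|\lambda_f(n)|\ll_{f,\epsilon}N^{1+\epsilon}$ together with $|n^{-it}V(n/N)|\le 1$, and this already gives the asserted bound whenever $N\le t^{2/3}$. So we may assume $N>t^{2/3}$ and must prove $S(N)\ll_{f,\epsilon}\sqrt{N}\,t^{1/3+\epsilon}$.

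The plan is to apply Lemma~\ref{trivialdeltalemma}. Introduce a redundant variable by writing $S(N)=\sum_{n,r}\lambda_f(n)\,r^{-it}\,V(n/N)\,U(r/N)\,\delta(n=r)$ with $U$ a fixed bump function that is $\equiv 1$ on the support of $V$; replacing $n^{-it}$ by $r^{-it}$ is the conductor-lowering manoeuvre, since the surviving $n$-sum then carries only the mild oscillation $n^{iK\nu}$ instead of $n^{-it}$. Fix a prime $p\sim P$ and a parameter $K$ with $N^{\epsilon}\le K\le\sqrt{N}$ and $PK\asymp N^{1+\epsilon}$; here $K$ is the free parameter, to be optimized at the end. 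After discarding the terms with $p\mid r$ (which contribute $\ll(N/P)^{1+\epsilon}$) and applying Lemma~\ref{trivialdeltalemma}, the principal character contributes the untwisted sum $\sum_n\lambda_f(n)n^{iK\nu}V(n/N)$, which is negligible by $\mathrm{GL}(2)$ Voronoi summation because $N^{\epsilon}\le K\le\sqrt{N}$; hence
\begin{align*}
S(N)=\frac{1}{\phi(p)}\sum_{\chi\,(p),\,\chi\ne\chi_{0}}\int_{0}^{\infty}V(\nu)\,\mathcal{N}(\chi,\nu)\,\mathcal{R}(\chi,\nu)\,d\nu+O_{A}\!\left(N^{-A}\right),
\end{align*}
where $\mathcal{N}(\chi,\nu)=\sum_{n}\lambda_f(n)\chi(n)\,n^{iK\nu}V(n/N)$ is a short multiplicatively twisted $\mathrm{GL}(2)$ Dirichlet polynomial and $\mathcal{R}(\chi,\nu)=\sum_{r}\overline{\chi}(r)\,r^{-it-iK\nu}U(r/N)$ is a short Dirichlet character sum.

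I would then dualize both inner sums. Expanding $\chi$ in additive characters via Gauss sums and applying the $\mathrm{GL}(2)$ Voronoi formula turns $\mathcal{N}(\chi,\nu)$ into a dual sum $\sum_{m}\lambda_f(m)\overline{\chi}(m)\,G^{\pm}(m,\nu)$ of length $m\asymp P^{2}K^{2}/N\asymp N$ (the dual of a length-$N$ piece of an object of analytic conductor $\asymp P^{2}K^{2}$), with $G^{\pm}$ an explicit Hankel-type transform; Poisson summation modulo $p$ turns $\mathcal{R}(\chi,\nu)$ into $\tfrac{\tau(\overline{\chi})}{p}\sum_{r^{*}}\chi(r^{*})\,H(r^{*},\nu)$ of length $r^{*}\asymp Pt/N$, with $H$ evaluated by stationary phase. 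The orthogonality sum $\frac{1}{\phi(p)}\sum_{\chi\ne\chi_{0}}\tau(\chi)\,\overline{\chi}(m)\,\chi(r^{*})$ collapses to $e(\mp m\,\overline{r^{*}}/p)$ up to a negligible term — this clean coupling of the two dual variables is exactly where the multiplicative-character formalism is more transparent than the additive one — so that
\begin{align*}
S(N)\approx\frac{1}{p^{2}}\sum_{\pm}\ \sum_{m\asymp N}\ \sum_{r^{*}\asymp Pt/N}\lambda_f(m)\,e\!\left(\frac{\mp m\,\overline{r^{*}}}{p}\right)\mathcal{I}^{\pm}(m,r^{*}),
\end{align*}
with $\mathcal{I}^{\pm}(m,r^{*})=\int V(\nu)\,G^{\pm}(m,\nu)\,H(r^{*},\nu)\,d\nu$; a one-dimensional stationary-phase analysis in $\nu$ (whose relevant second derivative is $\asymp K$) gives $\mathcal{I}^{\pm}(m,r^{*})\ll N^{2+\epsilon}\big/\big(K^{3/2}\sqrt{t}\big)$.

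It then remains to apply Cauchy--Schwarz to peel off the Hecke eigenvalues, using $\sum_{m\asymp N}|\lambda_f(m)|^{2}\ll_{f,\epsilon}N^{1+\epsilon}$, open the square in the $r^{*}$-variables, and apply Poisson summation (completion) in $m$: one is left with a diagonal term $r_{1}^{*}=r_{2}^{*}$ and off-diagonal terms controlled by Weil's bound for the emerging complete Kloosterman and Ramanujan sums modulo $p$, together with repeated integration by parts in the archimedean integrals. Collecting the bounds and choosing $K$ (equivalently $P$, subject to $PK\asymp N^{1+\epsilon}$) to balance the diagonal, the off-diagonal, and the error terms recorded above should yield $S(N)\ll_{f,\epsilon}\sqrt{N}\,t^{1/3+\epsilon}$. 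The hard part is exactly this final step: carrying the stationary-phase analysis of $\mathcal{I}^{\pm}$ and of its post-Cauchy--Schwarz analogues through with enough uniformity to exhibit the decay in $m$, $r^{*}$ and $\nu$, and then proving that the off-diagonal contribution is genuinely of lower order than the diagonal — this dichotomy is what pins the exponent to $1/3$ and dictates the admissible sizes of $P$ and $K$. If extra room is needed in the off-diagonal estimates one can instead use a $p$-averaged form of Lemma~\ref{trivialdeltalemma} (summing over primes $p\sim P$), at the cost of a more involved analysis of the resulting sums modulo $p_{1}p_{2}$.
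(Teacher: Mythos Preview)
Your outline matches the paper's architecture (introduce $r$, apply Lemma~\ref{trivialdeltalemma}, dualize both sums, collapse the character sum, stationary phase in $\nu$, Cauchy--Schwarz in the dual $\mathrm{GL}(2)$ variable, Poisson), but there is a genuine gap: with a \emph{single} prime $p$ the method cannot beat convexity. After Cauchy--Schwarz and Poisson in $m$, the purely diagonal term $r_1^{*}=r_2^{*}$ contributes to $S(N)^2$ an amount of size
\[
\left(\frac{N^2}{P^2K^{3/2}\sqrt{t}}\right)^{2}\cdot\frac{P^2K^2}{N}\cdot\frac{P^2K^2}{N}\cdot\frac{Pt}{N}\;=\;NKP,
\]
and since the delta method forces $PK\gg N^{1+\epsilon}$ this is always $\gg N^{2+\epsilon}$, i.e.\ only the trivial bound. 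The $p$-average over primes $p\sim P$ is therefore not an optional refinement for the off-diagonal as you suggest; it is what rescues the \emph{diagonal}. In the paper the average replaces the prefactor by $\frac{1}{P^*}\sum_{p}$, so after squaring one has $\frac{1}{P^{*2}}\sum_{p_1,p_2}$; the contribution of $p_1=p_2$ then carries an extra $1/P$, bringing the diagonal down to $NK$ (this is the paper's $\mathcal{D}_0$), while the genuinely new terms $p_1\neq p_2$ form the off-diagonal $\mathcal{O}$ and are controlled by the congruence modulo $p_1p_2$ together with a second-derivative bound on the resulting integral. Balancing $\mathcal{D}_0$, $\mathcal{D}_1$ and $\mathcal{O}$ is what pins down $K=t^{2/3}$ and yields the Weyl exponent.

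A second, smaller point: no Kloosterman sums appear. After Poisson in $m$ the arithmetic sum is linear in the summation variable (the inverse sits on $r_1^{*},r_2^{*}$, not on $m$), so it collapses to a congruence condition $\pm n\equiv\overline{Mr_2^{*}}\,p_1-\overline{Mr_1^{*}}\,p_2\pmod{p_1p_2}$ rather than a Kloosterman sum; Weil's bound plays no role, and the off-diagonal saving comes entirely from counting solutions to this congruence and from the second-derivative bound for $J_{p_1,p_2}$. Also, your constraint $K\le\sqrt{N}$ (imposed to kill the principal-character term by Voronoi) is incompatible with $K\asymp t^{2/3}$ in the range $t^{2/3}<N<t$; the paper instead removes the principal character \emph{after} dualization (their $T(N)$), which only needs $P^2>N$ and is satisfied by $P\asymp t^{1/2}$. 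Finally, the paper deliberately dualizes via Mellin inversion and the functional equations of $L(s,\chi)$ and $L(s,f\otimes\chi)$ rather than Poisson/Voronoi; this is equivalent to what you wrote but is the whole point of the multiplicative-character formulation.
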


As mentioned above, this result has been proved in \cite{taspecttrivial} by Aggarwal and by various people in slightly more restrictive settings. While our result is not new and the approach parallels the ideas and steps in \cite{taspecttrivial}, other than having a small simplification in the integral analysis, the novelty here is the replacement of additive characters by multiplicative characters. Although various delta methods and circle methods have been successfully used to establish great results especially in recent years, few have tried to use multiplicative characters to detect congruences. Moreover, as the usage of delta methods gains increasing attention, many have questioned the underlying reasons why delta methods provide a means to solve boundary problems and how to understand the steps involved.

In this paper, we hope to provide another viewpoint as multiplicative characters have a more natural linkage with $L$-functions. In particular, when one applies the trivial delta with multiplicative characters on this GL(2) $t$-aspect problem, one encounters sums of the form \begin{align*}
    \sum_n\frac{\la_f(n)\chi(n)}{n^s} \quad \text{ and }\quad  \sum_r \frac{\chi(r)}{r^{s'}}
\end{align*}
with a sum of $\chi\Mod{p}$. These sums are equal to $L(s,f\otimes\chi)$ and $L(s',\chi)$ respectively when $\re(s),\re(s')>1$. One can then view these sums as $L$-functions and relate them as members of a family of $L$-functions as $\chi$ runs over the Dirichlet characters mod $p$. This is a small step towards understanding delta methods by a family approach, which is vastly used in the moment methods that were traditionally used to solve boundary problems before the delta methods saw more play.

Another difference in using multiplicative characters instead of additive characters in a congruence detection is that multiplicative characters allow us to use a functional equation of a certain $L$-function instead of Voronoi summation for dual summation. This completely removes the need to deal with Bessel transforms that appear in Voronoi summation, but one has to deal with the Gamma function instead. While Bessel functions are related to the Gamma function, and the analysis of either one is not particularly complicated in this fixed GL(2) problem, the absence of Bessel functions can make the analysis simpler in certain cases.

Finally, we would like to point out that detecting congruences with multiplicative characters is not limited only to the trivial delta method shown in this paper, but may apply to any delta method that uses arithmetic congruences. While we don't anticipate improvements in results established by using additive characters for congruences, this may simplify the analysis in certain cases and hopefully provide a better understanding of how delta methods work as explained above.

\subsection*{Acknowledgments}

The author thanks R. Holowinsky for suggesting the problem and for his guidance, and K. Aggarwal for extensive discussions, valuable comments and his careful checking of the computations in this paper.

\section{Notations}

Let $\epsilon>0$, $a, b\in \C$, $c\in\Z$ and $t\rightarrow\infty$.

\begin{spacing}{1.5} 
 \begin{tabbing} 
 \= Simbolo \= Separador \= Significado \+ \kill 
 {\bf Symbol} \> \> {\bf Meaning} \\ \\
 $a\ll b$ \> \> There exists a constant $C>0$ such that $|a|\leq C|b|$\\
 $a\sim b$ \> \> $b\ll a\ll b$\\
 $a\sim_\epsilon b$ \> \> $bt^\epsilon\ll a\ll bt^\epsilon$\\
 $(a,b)$ \> \> The greatest common divisor of $a$ and $b$\\
 $e(x)$ \> \> $e^{2\pi ix}$\\
 $\displaystyle\sumast_{\alpha\Mod{c}}$ \> \> Sum over $\alpha\in (\Z/c)^*$\\
 $\displaystyle\sumast_{\chi\Mod{p}}$ \> \> Sum over primitive characters mod $p$\\
 $S(a,b;c)$ \> \> The Kloosterman sum $\displaystyle\sumast_{\alpha\Mod{c}}e\left(\frac{a\alpha+b\overline{\alpha}}{c}\right)$
 \end{tabbing}
\end{spacing}

We say $f$ is $X$-inert if it is smooth and for any $x\in\R$, $j\geq 0$, it satisfies \begin{align*}
    x^jf^{(j)}(x)\ll_{j,\epsilon} X^jt^\epsilon.
\end{align*}
Note that a similar definition is introduced by \cite{inert}. We include $t^\epsilon$ for convenience.

\section{Proof Sketch}

As $f$ is fixed and $t\rightarrow\infty$, we are going to assume $M=1$ in the sketch. We first apply an approximate functional equation to reduce the proof of Theorem \ref{thm1} to proving Theorem \ref{thm2}. Then we apply the trivial delta method in Lemma \ref{trivialdeltalemma} with an average over primes $p\sim P$ to separate the variables $n$ and $r$, giving us roughly \begin{align*}
    S(N)\sim \frac{1}{P}\sum_{p\sim P}\frac{1}{p}\sum_{\chi(p)}\int_0^\infty V\left(\nu\right)\sum_{n\sim N}\la_f(n)\chi(n)n^{iK\nu}\sum_{r\sim N}\overline{\chi}(r)r^{-i(t+K\nu)}d\nu,
\end{align*}
for $PK>N$. This is done in Section \ref{SectSetup}. The bound at this stage is $N^2$.

Next we perform, in Section \ref{SectDualSum}, dual summations to the $n$-sum and $r$-sum by doing Mellin inversion and using the functional equations of the $L$-functions $L(s,\chi)$ and $L(s,f\otimes\chi)$ respectively. The new length of $n$ becomes $\frac{P^2K^2}{N}$ and that of $r$ becomes $\frac{Pt}{N}$ as we take $K<t$. This implies that we save $\frac{N}{PK}$ and $\frac{N}{\sqrt{Pt}}$ in the $n$ and $r$ sum respectively. Summing over $\chi\Mod{p}$ and cleaning up the $\nu$-integral by stationary phase analysis gives us $\sqrt{P}$ and $\sqrt{K}$ savings respectively, and we end up with roughly \begin{align*}
    S(N)\sim\frac{N^2}{P^3K^\frac{3}{2}\sqrt{t}}\sum_{p\sim P}p^{-it}\sum_{n\sim\frac{P^2K^2}{N}}\overline{\la_f(n)}\sum_{r\sim \frac{Pt}{N}}r^{it}e\left(-\frac{n\overline{r}}{p}-\sqrt{\frac{2nt}{\pi pr}}-\frac{n}{2pr}\right).
\end{align*}
The bound at this stage is $P\sqrt{Kt}$, which can also be seen by the savings from $N^2$ from the process above. This is not good enough as we have the constraint $PK>N$. We have to save a bit more than $\sqrt{P}$ to beat convexity.

To break the structure, we perform the Cauchy-Schwarz inequality in Section \ref{SectCS} to eliminiate the GL(2) coefficients $\la_f(n)$ by taking out the $n$-sum. This gives us roughly \begin{align*}
    S(N)\leq\frac{N^\frac{3}{2}}{P^2\sqrt{Kt}}\left(\sum_{n\sim\frac{P^2K^2}{N}}\left|\sum_{p\sim P}p^{-it}\sum_{r\sim\frac{Pt}{N}}r^{it}e\left(\frac{n\overline{r}}{p}-\sqrt{\frac{2nt}{\pi pr}}-\frac{n}{2pr}\right)\right|^2\right)^\frac{1}{2}.
\end{align*}
Opening up the square, we perform Poisson summation on the $n$-sum, giving us roughly \begin{align*}
    S(N)\leq \frac{N\sqrt{K}}{P\sqrt{t}}\left(\sum_{|n|\ll\frac{N}{K}}\sum_{p_1\sim P}\sum_{p_2\sim P}\left(\frac{p_2}{p_1}\right)^{it}\sum_{r_1\sim\frac{Pt}{N}}\sum_{r_2\sim\frac{Pt}{N}}\left(\frac{r_1}{r_2}\right)^{it}\delta\left(n\equiv \overline{r_2}p_1-\overline{r_1}p_2\Mod{p_1p_2}\right)\times \text{ some integral}\right)^\frac{1}{2}.
\end{align*}
We then split the treatment into whether $p_1=p_2$ or not. For the case $p_1=p_2$, we subdivide further into the diagonal case with $r_1=r_2$ and the case $r_1\neq r_2$. The diagonal case is bounded by $\sqrt{NK}$ as we get square root savings in the $p$ and $r$-sums, and the $r_1\neq r_2$ case is bounded by $K^\frac{1}{4}\sqrt{t}$ as we get a $\sqrt{P}$ savings from the $p$-sum, a $\sqrt{P}$ savings by the congruence and a $K^\frac{1}{4}$ savings from the integral. For the off-diagonal $p_1\neq p_2$ case, we lose $\sqrt{\frac{N}{K}}$ on the $n$-sum length, but save $\sqrt{P}$ by the congruence on each $r_1$ and $r_2$ and also save $K^\frac{1}{4}$ in the integral, giving us the bound $\sqrt{Nt}K^{-\frac{1}{4}}$. The optimal choice of $K=t^\frac{2}{3}$ gives the desired bound $S(N)\ll \sqrt{N}t^{\frac{1}{3}+\epsilon}$.

\section{Preliminaries and Lemmas}

Here we collect the facts we need about Hecke cusp forms, $L$-functions and the standard tools we need for this paper.

\subsection{Hecke cusp forms}

For a Hecke cusp form $f$, the $L$-function associated to $f$ is defined by the analytic continuation of the series given by \begin{align*}
    L(s,f)=\prod_p\left(1-\frac{\alpha_{f,1}(p)}{p^s}\right)^{-1}\left(1-\frac{\alpha_{f,2}(p)}{p^s}\right)^{-1}=\sum_{n\geq1}\la_f(n)n^{-s} \text{  for } \re(s)>1.
\end{align*}
When $f$ is primitive, $\la_f(n)$ is equal to the Hecke eigenvalues of $f$.

If $f$ is holomorphic, $\la_f(n)$ satisfies the following bound by Deligne, \begin{align}\label{DeligneBound}
    |\la_f(n)|\leq \tau(n)\ll n^\epsilon
\end{align} 
for any positive integer $n$. If $f$ is a Maass cusp form, a proof for a bound similar to (\ref{DeligneBound}) has not been established. However, $\la_f(n)$ do satisfy "Ramanujan-Petersson on average" by Rankin-Selberg theory, that is \begin{align}\label{RPavg}
    \sum_{1\leq n\leq x}\left|\la_f(n)\right|^2\ll_{f,\epsilon}x^{1+\epsilon}.
\end{align}

\subsection{Functional Equations}

Let $\chi$ be a primitive Dirichlet character mod $p$. Let $f$ be either a level $M$ holomorphic cusp form of weight $k$ with nebentypus $\psi$ or a level $M$ Hecke-Maass cusp form with Laplace eigenvalue $\frac{1}{4}+r^2$ of nebentypus $\psi$. Let \begin{align*}
    a=\begin{cases}0&\text{ if }\chi(-1)=1\\
1&\text{ if }\chi(-1)=-1,\end{cases}
\end{align*}
\begin{align*}
    \delta_f=\begin{cases}0&\text{ if } f \text{ is holomorphic or an even Maass form} \\
    1&\text{ if } f \text{ is an odd Maass form}\end{cases}
\end{align*}
and let \begin{align*}
    \gamma_f(s)=\begin{cases}(2\pi)^{-s}\Gamma\left(s+\frac{k-1}{2}\right) & \text{ if } f \text{ is holomorphic}\\
    \pi^{-s}\Gamma\left(\frac{s+\delta_f+ir}{2}\right)\Gamma\left(\frac{s+\delta_f-ir}{2}\right)& \text{ if } f \text{ is Maass.}\end{cases}
\end{align*}
Define \begin{align*}
    \Lambda(s,\chi)=&\left(\frac{\pi}{p}\right)^{-\frac{s+a}{2}}\Gamma\left(\frac{s+a}{2}\right)L(s,\chi),\\
    \Lambda(s,f)=&M^\frac{s}{2} \gamma_f(s) L(s, f)\\
    \Lambda(s,f\otimes\chi)=&(\sqrt{M}p)^s\gamma_f(s+a(1-2\delta_f))L(s,f\otimes\chi),
\end{align*}
and let $\overline{f}$ be the dual cusp form defined by $\la_{\overline{f}}(n)=\overline{\la_f(n)}$. Then for some root number $\epsilon(f)\in\C$ satisfying $|\epsilon(f)|=1$, we have

\begin{lemma}\label{FElemma}(Functional Equation)
\begin{align*}
    \Lambda(s,\chi)=&i^{-a}\epsilon_\chi\Lambda(1-s,\overline{\chi})\\
    \Lambda(s,f)=&\epsilon(f)\Lambda(1-s,\overline{f})\\
    \Lambda(s,f\otimes\chi)=&\epsilon(f)\psi(p)\chi(M)\epsilon_\chi^2\Lambda(1-s,\overline{f}\otimes\overline{\chi}).
\end{align*}
\end{lemma}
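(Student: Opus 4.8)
The plan is to prove the three functional equations separately, treating the Dirichlet case and the untwisted form case as essentially classical and concentrating the real work on the twisted $L$-function; throughout I would carry the holomorphic and Maass cases in parallel, the only difference being which archimedean integral produces the gamma factor $\gamma_f(s)$.

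\textbf{The Dirichlet $L$-function.} Since $p$ is prime and $\chi$ is primitive, hence non-principal, $L(s,\chi)$ is entire, and I would obtain the reflection formula in the standard way via the theta series $\theta_\chi(y)=\sum_{n\in\Z}\chi(n)n^a e^{-\pi n^2 y/p}$: Poisson summation turns $\theta_\chi$ into $\theta_{\overline\chi}$ with the Gauss-sum factor
\[
\epsilon_\chi=\frac{\tau(\chi)}{\sqrt p},\qquad \tau(\chi)=\sum_{b\bmod p}\chi(b)\,e\!\left(\frac{b}{p}\right),
\]
and the Mellin transform in $y$ then recovers $\Lambda(s,\chi)$, the factor $i^{-a}$ and the shift $a$ in the gamma factor coming from the $n^a$ weight forced by $\chi(-1)=(-1)^a$. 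That $|\epsilon_\chi|=1$ is the classical identity $|\tau(\chi)|^2=p$. I would cite Davenport or Iwaniec--Kowalski here rather than reproduce this.

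\textbf{The $L$-function of $f$.} This is Hecke's argument. Starting from the Fourier expansion at $\infty$ --- $f(z)=\sum_{n\ge 1}\lambda_f(n)n^{(k-1)/2}e(nz)$ in the holomorphic case, and $f(z)=\sum_{n\neq 0}\lambda_f(|n|)\sqrt{y}\,K_{ir}(2\pi|n|y)e(nx)$ with parity $f(-\overline z)=(-1)^{\delta_f}f(z)$ in the Maass case --- I would take the Mellin transform along the imaginary axis scaled by $\sqrt M$, obtaining $\Lambda(s,f)=\int_0^\infty f\!\left(\tfrac{iy}{\sqrt M}\right)y^{s+(k-1)/2}\tfrac{dy}{y}$ (the Maass case is identical after the $K$-Bessel Mellin transform produces $\gamma_f(s)$, with $\delta_f$ entering through the parity). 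Splitting at $y=1$ and using that $f$ is an eigenfunction of the Fricke involution $W_M\colon z\mapsto -1/(Mz)$ with eigenvalue $\epsilon(f)$, equivalently $f|_k W_M=\epsilon(f)\,\overline f$, the two pieces match under $y\mapsto 1/y$ and give $\Lambda(s,f)=\epsilon(f)\Lambda(1-s,\overline f)$; that $|\epsilon(f)|=1$ follows since $W_M^2$ is scalar and $\overline{\overline f}=f$.

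\textbf{The twisted $L$-function.} Here is where care is needed. In our application $(p,M)=1$ since $p$ ranges over large primes, so I would expand $\chi$ through its Gauss sum: for $(n,p)=1$, $\chi(n)=\tau(\overline\chi)^{-1}\sum_{b\bmod p}\overline\chi(b)e(bn/p)$, and the $p\mid n$ terms vanish by primitivity, so this holds for all $n$. Hence the twisted form satisfies
\[
f_\chi(z):=\sum_{n\ge 1}\lambda_f(n)\chi(n)n^{(k-1)/2}e(nz)=\frac{1}{\tau(\overline\chi)}\sum_{b\bmod p}\overline\chi(b)\,f\!\left(z+\frac{b}{p}\right),
\]
exhibiting $f_\chi$ as a cusp form of level $Mp^2$ and nebentypus $\psi\chi^2$. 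Applying the transformation law of $f$ to each translate $z+b/p$ --- acting by a matrix sending $Mp^2 z\mapsto -1/(Mp^2 z)$ and rearranging the resulting additive twists using $\tau(\chi)\tau(\overline\chi)=\chi(-1)p$ --- shows $f_\chi$ is, up to an explicit constant, the Fricke transform of $\overline f_{\overline\chi}$, the constant being $\epsilon(f)\psi(p)\chi(M)\epsilon_\chi^2$ (the $\chi(M)$ from the diagonal scaling, $\psi(p)$ from the nebentypus, $\epsilon_\chi^2$ from collapsing the two Gauss sums). Feeding this into the same Mellin-and-split argument as for $L(s,f)$, now with gamma factor $\gamma_f(s)$ and conductor data packaged into $(\sqrt M p/2\pi)^s$, yields the stated functional equation. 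The main obstacle is precisely this last bookkeeping: correctly tracking the level $Mp^2$, the nebentypus $\psi\chi^2$, the Fricke/Atkin--Lehner normalization, and the Gauss-sum identities so that the root number comes out as $\epsilon(f)\psi(p)\chi(M)\epsilon_\chi^2$. Since none of this is new, in the write-up I would state the needed facts precisely and refer to standard sources (Iwaniec--Kowalski, Shimura, Li) rather than carry out the computation in full.
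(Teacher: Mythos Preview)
Your proposal is correct and in fact goes well beyond what the paper does: the paper gives no proof of this lemma at all, merely writing ``For proof, see for example \cite{kmv, secondmoment, winnieli}.'' Your sketch outlines precisely the classical arguments that those references contain (theta series plus Poisson for $L(s,\chi)$; Hecke's Mellin transform plus Fricke involution for $L(s,f)$; Gauss-sum expansion realizing $f_\chi$ on $\Gamma_0(Mp^2)$ with nebentypus $\psi\chi^2$ for the twist), and your own final remark that you would cite standard sources rather than reproduce the bookkeeping is exactly what the paper does.
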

For a proof, see for example \cite{secondmoment, kmv, winnieli}.

\subsection{Approximate Functional Equation}

In order to express $L\left(\frac{1}{2}+it,f\right)$ in a more tractable way, we apply the following approximate functional equation. See for example \cite[Thm 5.3, Prop. 5.4]{IwaniecKowalski} for a proof.

\begin{lemma}\label{AFElemma}
    Let $G(u)$ be an even holormophic function bounded in the strip $-4<\re(u)<4$ and normalized by $G(0)=1$. Then for $s$ in the strip $0\leq \re(s)\leq 1$, we have \begin{align*}
        L(s,f)=\sum_n\frac{\la_f(n)}{n^s}V_s\left(\frac{n}{\sqrt{M}}\right)+\epsilon(f)M^{\frac{1}{2}-s}\frac{\gamma_f(1-s)}{\gamma_f(s)}\sum_n\frac{\la_f(n)}{n^{1-s}}V_{1-s}\left(\frac{n}{\sqrt{M}}\right)+R,
    \end{align*}
    where $R\ll1$ and $V_s(y)$ satisfies for any $j\geq0, A>0$, \begin{align*}
        y^jV_s^{(j)}(y)\ll_{j,A}\left(1+\frac{y}{\sqrt{t}}\right)^{-A}.
    \end{align*}
\end{lemma}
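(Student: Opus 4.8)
The plan is to run the classical contour-shift derivation of an approximate functional equation, driven by the functional equation for $L(s,f)$ from Lemma \ref{FElemma} together with Stirling's formula for the archimedean factor $\gamma_f$. With $\Lambda(s,f)=\pi^{-s}M^{s/2}\gamma_f(s)L(s,f)$ as above, for $s$ with $0\le\re(s)\le1$ I would introduce
\begin{align*}
    I(s,f)=\frac{1}{2\pi i}\int_{(3)}\Lambda(s+u,f)\,G(u)\,\frac{du}{u}.
\end{align*}
Since $f$ is cuspidal, $\Lambda(\cdot,f)$ is entire; since $\gamma_f$ decays exponentially along vertical lines, the integrand is rapidly decreasing in $\im(u)$ uniformly for $\re(u)$ in a compact interval (on the left-hand line one invokes the functional equation to control $L$), so the boundedness of $G$ already suffices to justify every contour shift below.

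First I would push the contour to $\re(u)=-3$, crossing only the simple pole at $u=0$, whose residue is $G(0)\Lambda(s,f)=\Lambda(s,f)$. In the shifted integral I apply $\Lambda(s+u,f)=\epsilon(f)\Lambda(1-s-u,\ov{f})$ from Lemma \ref{FElemma}, change variables $u\mapsto-u$, and use that $G$ is even; the shifted integral thereby becomes $-\epsilon(f)\,I(1-s,\ov{f})$, so that
\begin{align*}
    \Lambda(s,f)=I(s,f)+\epsilon(f)\,I(1-s,\ov{f}).
\end{align*}
Second, in each of $I(s,f)$ and $I(1-s,\ov{f})$ the line of integration lies in the half-plane of absolute convergence of the relevant Dirichlet series, so I expand $L$ into its series, interchange sum and integral, and separate the elementary powers of $\pi$, $M$ and $n$. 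Setting
\begin{align*}
    V_s(y)=\frac{1}{2\pi i}\int_{(3)}\frac{\gamma_f(s+u)}{\gamma_f(s)}\,G(u)\,(\pi y)^{-u}\,\frac{du}{u},
\end{align*}
this gives $I(s,f)=\pi^{-s}M^{s/2}\gamma_f(s)\sum_n\la_f(n)n^{-s}V_s(n/\sqrt{M})$ and, using $\gamma_{\ov{f}}=\gamma_f$, an analogous expression for $I(1-s,\ov{f})$ with the Hecke coefficients of $\ov{f}$ in place of $\la_f(n)$. Dividing the previous display by $\pi^{-s}M^{s/2}\gamma_f(s)$ then isolates $L(s,f)$ and, after collecting the remaining $\pi$- and $M$-powers, produces an identity of the asserted shape; the term $R$ records residues at possible poles of the integrand, which is empty since $L(s,f)$ is entire, so $R=0$ (in particular $R\ll1$).

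It remains to establish the two properties of $V_s$. Smoothness on $\R^+$ is immediate, as the defining $u$-integral, together with each of its $y$-derivatives, converges absolutely. For the decay I would shift the $u$-contour to $\re(u)=A$ for arbitrary $A>0$; no pole is crossed, because $1/u$ is singular only at $u=0$ and $\gamma_f(s+u)$ is holomorphic for $\re(s+u)>0$. This yields $y^jV_s^{(j)}(y)\ll_{j,A}y^{-A}\sup_{\re(u)=A}\bigl|\gamma_f(s+u)/\gamma_f(s)\bigr|$, and Stirling's formula for the gamma ratio --- a single $\Gamma$-factor if $f$ is holomorphic, a product of two if $f$ is Maass --- bounds the supremum by a fixed power of the archimedean conductor, which together with the trivial bound for $y$ small gives the stated decay; this is precisely \cite[Prop.~5.4]{IwaniecKowalski}.

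The only step where anything non-formal happens is this last Stirling estimate for $\gamma_f(s+u)/\gamma_f(s)$ on the shifted lines, carried out with explicit dependence on $t\asymp\im(s)$ as $t\to\infty$; all the preceding manipulations are formal and are licensed by the exponential vertical decay of $\gamma_f$. Since $f$ is fixed --- so the weight $k$, or the spectral parameter $r$, and the parity constants are all $O(1)$ --- this Stirling computation is entirely routine.
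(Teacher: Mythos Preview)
Your argument is correct and is exactly the standard contour-shift proof from \cite[Thm~5.3, Prop.~5.4]{IwaniecKowalski} that the paper cites in lieu of its own proof. The only cosmetic discrepancy is that your dual sum carries $\la_{\ov f}(n)=\overline{\la_f(n)}$ rather than the $\la_f(n)$ written in the paper's statement, which is the accurate version and is harmless for the application.
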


\subsection{Gamma Function}

To analyse the Gamma function, we have the following Stirling's approximation. \begin{lemma}
    Let $a_0=1$, $a_1=-\frac{1}{12}$ and let $z\rightarrow\infty$ with $|\arg(z)|\leq\pi-\delta$ for any $\delta>0$. There exists $a_n\in\C$ for any $n\geq 2$ such that for any $N\geq1$, \begin{align*}
        \Gamma(z)=\sqrt{2\pi}\frac{z^{z-\frac{1}{2}}}{e^z}\left(\sum_{n=0}^{N-1}\frac{a_n}{z^n}+O\left(|z|^{-N}\right)\right).
    \end{align*}
\end{lemma}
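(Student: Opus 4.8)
The plan is to derive the asymptotic expansion from the classical integral representation together with the Laplace/saddle-point method, and then to upgrade from a fixed number of terms to an arbitrary-length expansion with the stated error. First I would record the Binet-type starting point: for $\re(z)>0$ one has the representation
\begin{align*}
    \log\Gamma(z) = \left(z-\tfrac{1}{2}\right)\log z - z + \tfrac{1}{2}\log(2\pi) + \mu(z),
\end{align*}
where $\mu(z) = \int_0^\infty \left(\tfrac{1}{2} - \tfrac{1}{u} + \tfrac{1}{e^u-1}\right)\tfrac{e^{-uz}}{u}\,du$ is Binet's second integral; this already proves the $N=1$ case after exponentiating, since $\mu(z)\to 0$ as $z\to\infty$ in any sector $|\arg z|\le \pi-\delta$ (one bounds $\mu(z)$ by pulling $e^{-u\,\re z}$ out and using $\re z \gg |z|$ in the sector). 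Alternatively, and more in keeping with what is actually needed downstream, one may start from Hankel's contour integral for $1/\Gamma(z)$ or from $\Gamma(z)=\int_0^\infty u^{z-1}e^{-u}\,du$ after the substitution $u=zv$, which converts the problem into a Laplace integral $\Gamma(z)=z^z e^{-z}\int_0^\infty e^{-z(v-1-\log v)}\,dv/v$ with a nondegenerate critical point at $v=1$.

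Next I would expand $\phi(v):=v-1-\log v$ around $v=1$, writing $\phi(v)=\tfrac{1}{2}w^2$ via an analytic change of variables $w=w(v)$ (valid since $\phi''(1)=1\ne0$), so that
\begin{align*}
    \int_0^\infty e^{-z\phi(v)}\frac{dv}{v} = \int_{-\infty}^\infty e^{-zw^2/2}\, g(w)\,dw
\end{align*}
for a function $g$ analytic near $w=0$ with $g(0)=1$ (the Jacobian contributes $\tfrac{dv}{v\,dw}$, which equals $1$ at $w=0$). Watson's lemma, or equivalently term-by-term Gaussian integration of the Taylor expansion of $g$, then yields $\sqrt{2\pi/z}\,\bigl(\sum_{n=0}^{N-1} b_n z^{-n} + O(|z|^{-N})\bigr)$, and combining with the prefactor $z^z e^{-z}$ gives the claimed shape $\sqrt{2\pi}\,z^{z-1/2}e^{-z}\bigl(\sum a_n z^{-n} + O(|z|^{-N})\bigr)$. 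The constants $a_0=1$ and $a_1=-\tfrac{1}{12}$ are then pinned down by computing the first two coefficients of $g$ explicitly (the value $a_1=-1/12$ can be cross-checked against the well-known expansion $\log\Gamma(z)=(z-\tfrac12)\log z - z + \tfrac12\log 2\pi + \tfrac{1}{12z} + \cdots$).

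The main obstacle is making the error term uniform in the sector $|\arg z|\le \pi-\delta$ rather than only on the positive real axis: Watson's lemma in its textbook form is stated for $z\to\infty$ along $\R^+$, and one must justify rotating the contour of integration (or deforming the path of steepest descent) so that the estimate persists for complex $z$. Concretely, for $z=|z|e^{i\theta}$ one deforms the $v$-contour to the ray through the saddle along which $\re(z\phi)$ is controlled, and one checks that the tail contributions away from $v=1$ are exponentially smaller than any power of $|z|$, with implied constants depending only on $\delta$ and $N$. This uniformity argument — controlling the remainder in Taylor's theorem for $g$ with the Gaussian weight $e^{-zw^2/2}$ whose axis of decay rotates with $\arg z$ — is the one genuinely delicate point; everything else is bookkeeping. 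For the paper's purposes it suffices to cite a standard reference (e.g. Olver, \emph{Asymptotics and Special Functions}, or Whittaker–Watson) for this uniform version, and I would do so rather than reproduce the contour-rotation details in full.
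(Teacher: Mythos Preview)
Your proof plan is a correct and standard derivation of Stirling's asymptotic expansion via Laplace's method (or equivalently Binet's integral plus Watson's lemma), with appropriate attention to the uniformity in the sector $|\arg z|\le \pi-\delta$. However, the paper does not actually prove this lemma at all: it is simply stated as the classical Stirling approximation and then used as a black box in the proofs of Lemmas~\ref{GammaRatioBound} and~\ref{GammaRatioLemma}. So there is nothing to compare against; your outline goes well beyond what the paper supplies. Citing a standard reference (Olver or Whittaker--Watson), as you suggest at the end, is exactly what the paper implicitly does.

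One small point worth flagging: the cross-check you mention, $\log\Gamma(z)=(z-\tfrac12)\log z - z + \tfrac12\log 2\pi + \tfrac{1}{12z} + \cdots$, actually yields $a_1=+\tfrac{1}{12}$ after exponentiating, not $-\tfrac{1}{12}$. The sign in the lemma statement appears to be a typo in the paper; it is immaterial for the applications downstream, which only use the general shape of the expansion and the bound on the remainder.
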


By Stirling's approximation, we have the following two lemmas.

\begin{lemma}\label{GammaRatioBound}
    Let $\alpha,\beta,\tau\in\R$ such that $\alpha,\beta$ is fixed. Then \begin{align*}
        \left|\frac{\Gamma(\alpha+i\tau)}{\Gamma(\beta-i\tau)}\right|\ll (1+|\tau|)^{\alpha-\beta}.
    \end{align*}
\end{lemma}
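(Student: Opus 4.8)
The plan is to deduce this directly from Stirling's approximation. Since $\overline{\Gamma(z)}=\Gamma(\overline z)$ we have $|\Gamma(\alpha+i\tau)|=|\Gamma(\alpha-i\tau)|$, so we may assume $\tau\geq 0$; and for $\tau$ in a bounded range, say $0\leq\tau\leq 1$ (with $\alpha,\beta$ away from the poles of $\Gamma$, as will be the case in all applications), both $|\Gamma(\alpha+i\tau)|$ and $|\Gamma(\beta-i\tau)|^{-1}$ are bounded above and below by constants depending only on $\alpha,\beta$, so the ratio is $\ll 1\sim(1+|\tau|)^{\alpha-\beta}$. It therefore suffices to treat $\tau\geq 1$. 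There, setting $z_1:=\alpha+i\tau$ and $z_2:=\beta-i\tau$, both satisfy $|\arg z_j|\leq\pi/2<\pi-\delta$ and $|z_j|\sim\tau$, so Stirling's approximation with $N=1$ applies and yields
\begin{align*}
    \left|\frac{\Gamma(z_1)}{\Gamma(z_2)}\right|=\left|\frac{z_1^{z_1-1/2}e^{-z_1}}{z_2^{z_2-1/2}e^{-z_2}}\right|\bigl(1+O(\tau^{-1})\bigr).
\end{align*}

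Next I would evaluate the modulus of each factor. Writing $\log z=\log|z|+i\arg z$ and using $|z^w|=e^{\re(w\log z)}$, one gets
\begin{align*}
    \log\left|z_1^{z_1-1/2}e^{-z_1}\right|=\left(\alpha-\tfrac12\right)\log|z_1|-\tau\arg z_1-\alpha,
\end{align*}
and similarly $\log|z_2^{z_2-1/2}e^{-z_2}|=(\beta-\tfrac12)\log|z_2|+\tau\arg z_2-\beta$ (note $\im z_2=-\tau$). I would then insert the expansions $\log|z_1|=\log\tau+O(\tau^{-2})$, $\arg z_1=\tfrac{\pi}{2}-\tfrac{\alpha}{\tau}+O(\tau^{-3})$ and the analogous ones for $z_2$ with $\arg z_2=-\tfrac{\pi}{2}+\tfrac{\beta}{\tau}+O(\tau^{-3})$. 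The key point is that $-\tau\arg z_1$ and $\tau\arg z_2$ each equal $-\tfrac{\pi\tau}{2}$ up to a bounded term; these $-\tfrac{\pi\tau}{2}$ contributions — which encode the exponential decay $e^{-\pi|\tau|/2}$ of $\Gamma$ on vertical lines — cancel in the quotient, together with the constants $\pm\alpha$ and $\pm\beta$. What survives is
\begin{align*}
    \log\left|\frac{\Gamma(z_1)}{\Gamma(z_2)}\right|=(\alpha-\beta)\log\tau+O(\tau^{-1}),
\end{align*}
and exponentiating and using $\tau^{\alpha-\beta}\sim(1+|\tau|)^{\alpha-\beta}$ for $\tau\geq1$ completes the proof.

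There is no genuine obstacle here; the only thing requiring care is the bookkeeping of $\re((z-\tfrac12)\log z)$ and the verification that the exponentially large pieces cancel exactly. For the one-sided bound claimed it is in fact enough to know $\arg z_j=\pm\tfrac{\pi}{2}+O(\tau^{-1})$ and $\log|z_j|=\log\tau+O(\tau^{-1})$, since any residual $O(1)$ error is absorbed into the implied constant; one only needs the finer expansion of $\arg z_j$ if one wants the matching lower bound as well.
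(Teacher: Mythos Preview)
Your proposal is correct and follows essentially the same route as the paper: handle bounded $\tau$ trivially, then apply Stirling for large $|\tau|$, compute $\re\bigl((z-\tfrac12)\log z - z\bigr)$ for numerator and denominator, and observe that the $-\tfrac{\pi}{2}|\tau|$ contributions cancel in the ratio, leaving $(\alpha-\beta)\log|\tau|+O(1)$. The paper writes the same computation slightly more compactly via $(\alpha^2+\tau^2)^{\alpha/2-1/4}e^{-\tau\arctan(\tau/\alpha)}$ and the expansion $\arctan x=\tfrac{\pi}{2}-x^{-1}+O(x^{-3})$, but the substance is identical.
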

\begin{proof}
    When $|\tau|\ll1$, this is trivial. So we are left with the case $|\tau|\rightarrow\infty$. By Stirling's approximation, we have \begin{align*}
        \left|\frac{\Gamma(\alpha+i\tau)}{\Gamma(\beta-i\tau)}\right|\ll\frac{(\alpha^2+\tau^2)^{\frac{\alpha}{2}-\frac{1}{4}}}{(\beta^2+\tau^2)^{\frac{\beta}{2}-\frac{1}{4}}}e^{-\tau\arctan\frac{\tau}{\alpha}+\tau\arctan\frac{\tau}{\beta}-\alpha+\beta}\ll|\tau|^{\alpha-\beta}.
    \end{align*}
    To get the second inequality we used the Taylor expansion $\arctan(x)=\frac{\pi}{2}-\arctan(x^{-1})=\frac{\pi}{2}-x^{-1}+O(x^{-3})$.
\end{proof}

\begin{lemma}\label{GammaRatioLemma}
Let $\alpha, \beta, \tau\in\R$ such that $\alpha$ is fixed, $\tau\ll t^\epsilon\ll \beta t^{-\epsilon}$. Then \begin{align*}
    \frac{\Gamma(\alpha-i(\beta+\tau))}{\Gamma(\alpha+i(\beta+\tau))}=\left(\frac{|\beta|}{e}\right)^{-2i\beta}|\beta|^{-2i\tau}e\left(-\frac{\tau^2}{2\pi\beta}\right)W_\alpha(\beta)+O(\beta^{-2}),
\end{align*}
where \begin{align*}
    W_\alpha(\beta)=e\left(\frac{1-2\alpha}{2\pi}\left(\frac{\pi}{2}-\frac{\alpha}{\beta}\right)\right)\left(1-\frac{i\alpha^2}{\beta}\right)\frac{1+\frac{1}{12(\alpha-i\beta)}}{1+\frac{1}{12(\alpha+i\beta)}}
\end{align*}
is $1$-inert in $\beta$.
\end{lemma}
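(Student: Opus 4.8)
The plan is to apply Stirling's approximation with $N=2$ to both $\Gamma(\alpha - i(\beta+\tau))$ and $\Gamma(\alpha + i(\beta+\tau))$, form the ratio, and then expand the resulting elementary factors in powers of $1/\beta$, keeping everything down to the $O(\beta^{-2})$ level. Writing $z = \alpha + i(\beta+\tau)$, Stirling gives $\Gamma(z) = \sqrt{2\pi}\, z^{z-1/2} e^{-z}\bigl(1 + \tfrac{1}{12z} + O(|z|^{-2})\bigr)$, and similarly for $\bar z = \alpha - i(\beta+\tau)$ (note $\Gamma(\bar z) = \overline{\Gamma(z)}$ since $\alpha,\beta,\tau$ are real). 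The rational prefactor in $W_\alpha(\beta)$ is exactly $\bigl(1+\tfrac{1}{12(\alpha-i\beta)}\bigr)/\bigl(1+\tfrac{1}{12(\alpha+i\beta)}\bigr)$; one checks that replacing $\beta$ by $\beta+\tau$ here only perturbs it by $O(\tau/\beta^2) = O(\beta^{-2+\epsilon})$, so to the stated precision we may freely drop the $\tau$ in that correction term, and the $O(|z|^{-2})$ Stirling errors are absorbed into the final $O(\beta^{-2})$.

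The heart of the computation is the main factor $z^{z-1/2}/\bar z^{\bar z - 1/2} = \exp\bigl((z-\tfrac12)\log z - (\bar z - \tfrac12)\log \bar z\bigr)$ times $e^{\bar z - z} = e^{2i(\beta+\tau)}$. I would write $z = i(\beta+\tau)\bigl(1 - i\alpha/(\beta+\tau)\bigr)$, so that $\log z = \log|\beta+\tau| + i\operatorname{sgn}(\beta)\tfrac{\pi}{2} - i\alpha/(\beta+\tau) + O(\beta^{-2})$ (the $\log$ of $1 - i\alpha/(\beta+\tau)$ expands as $-i\alpha/(\beta+\tau) + O(\beta^{-2})$, and its contribution multiplied by $z-\tfrac12 \asymp \beta$ stays $O(\beta^{-1})$, which is why we must track it). Substituting and taking the difference $(z-\tfrac12)\log z - (\bar z-\tfrac12)\log\bar z$, the real parts cancel and one is left with $2i$ times an explicit real expression. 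Then I expand $\log|\beta+\tau| = \log|\beta| + \tau/\beta - \tau^2/(2\beta^2) + O(\beta^{-3})$ and $(\beta+\tau)\cdot(\text{stuff})$, collecting terms: the $\beta\log|\beta|$-type term produces $(|\beta|/e)^{-2i\beta}$ together with the $e^{2i\beta}$ from $e^{\bar z - z}$; the $\tau \log|\beta|$ term produces $|\beta|^{-2i\tau}$; the cross term $\beta \cdot \tau/\beta = \tau$ combined with $\tau\cdot\tau/\beta$ and the $\tau^2/\beta$ from $\log|\beta+\tau|$ and the $e^{2i\tau}$ from $e^{\bar z - z}$ must combine to give exactly $e(-\tau^2/(2\pi\beta))$ after the dust settles; the terms involving $\alpha$ — namely $-\tfrac12 \cdot i\operatorname{sgn}(\beta)\tfrac\pi2$, $\alpha \cdot i\operatorname{sgn}(\beta)\tfrac\pi2$, and $-(\beta+\tau)\cdot(-\alpha/(\beta+\tau))$, i.e. the $+\alpha$ — assemble into the phase $e\bigl(\tfrac{1-2\alpha}{2\pi}(\tfrac\pi2 - \tfrac{\alpha}{\beta})\bigr)$ and the factor $(1 - i\alpha^2/\beta)$ (this last one coming from the $O(\beta^{-1})$ piece $-\alpha^2/\beta$ sitting inside the exponent, expanded as $1 - i\alpha^2/\beta + O(\beta^{-2})$). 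The inertness of $W_\alpha(\beta)$ in $\beta$ is immediate from its closed form: each factor is a smooth function of $1/\beta$ bounded with all derivatives scaling correctly, since $\beta \gg t^\epsilon$ and $\alpha$ is fixed.

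The main obstacle is purely bookkeeping: organizing the expansion of $(z-\tfrac12)\log z - (\bar z - \tfrac12)\log \bar z$ so that every term of size $\gg \beta^{-2}$ is accounted for and correctly attributed to one of the four output factors, while being careful that $\tau$ can be as large as $t^\epsilon$ (so $\tau/\beta$ is small but $\tau \cdot \tau/\beta$ and $\tau^2/\beta$ are genuine contributions one cannot discard, whereas $\tau^3/\beta^2$ and $\alpha\tau/\beta^2$ are negligible). A secondary point requiring care is the sign $\operatorname{sgn}(\beta)$: the statement is written with $|\beta|$, and one should verify the formula is consistent under $\beta \mapsto -\beta$ — indeed swapping the sign of $\beta$ swaps $z \leftrightarrow \bar z$ up to the $\tau$ and $\alpha$ shifts, and the asymmetry is carried entirely by the explicitly written factors. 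No deep input is needed beyond Stirling; the lemma is a careful second-order stationary-phase-type expansion of a Gamma ratio.
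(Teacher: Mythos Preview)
Your proposal is correct and follows essentially the same route as the paper: apply Stirling with the first correction term, then Taylor-expand the resulting elementary factors in $1/\beta$ to the stated precision; the paper organizes the main Stirling factor in polar form $\bigl(\alpha^2+(\beta+\tau)^2\bigr)^{-i(\beta+\tau)}e^{(2\alpha-1)i\arg(\alpha-i(\beta+\tau))}$ before expanding, whereas you expand $\log z$ directly, but these are the same computation. One minor slip to watch when you write it up: your displayed ``main factor'' is inverted (you want $\bar z^{\bar z-1/2}/z^{z-1/2}\cdot e^{z-\bar z}$, and $e^{\bar z - z}=e^{-2i(\beta+\tau)}$, not $e^{2i(\beta+\tau)}$), though this is exactly the bookkeeping hazard you already flagged.
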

\begin{proof}
By Stirling's Approximation, we have \begin{align}\label{GammaRatioTemp}
    \frac{\Gamma(\alpha-i(\beta+\tau))}{\Gamma(\alpha+i(\beta+\tau))}=\left(\frac{\alpha^2+(\beta+\tau)^2}{e^2}\right)^{-i(\beta+\tau)}e^{(2\alpha-1)i\arg(\alpha-i(\beta+\tau))}\left(\frac{1+\frac{1}{12(\alpha-i(\beta+\tau))}}{1+\frac{1}{12(\alpha+i(\beta+\tau))}}+O\left(\beta^{-2}\right)\right).
\end{align}
By Taylor expansions, we get \begin{align*}
    \frac{1+\frac{1}{12(\alpha-i(\beta+\tau))}}{1+\frac{1}{12(\alpha+i(\beta+\tau))}}=\frac{1+\frac{1}{12(\alpha-i\beta)}}{1+\frac{1}{12(\alpha+i\beta)}}+O(\beta^{-2}),
\end{align*}
\begin{align*}
    e\left(\frac{2\alpha-1}{2\pi}\arg\left(\alpha-i(\beta+\tau)\right)\right)=&e\left(-\frac{2\alpha-1}{2\pi}\arctan\left(\frac{\beta+\tau}{\alpha}\right)\right)\\
    =&e\left(\frac{1-2\alpha}{2\pi}\left(\frac{\pi}{2}-\arctan\left(\frac{\alpha}{\beta+\tau}\right)\right)\right)=e\left(\frac{1-2\alpha}{2\pi}\left(\frac{\pi}{2}-\frac{\alpha}{\beta}\right)\right)+O\left(\beta^{-2}\right)
\end{align*}
and \begin{align*}
    \left(\frac{\alpha^2+(\beta+\tau)^2}{e^2}\right)^{-i(\beta+\tau)}=&\left(\frac{|\beta|}{e}\right)^{-2i(\beta+\tau)}\left(1+\frac{\tau}{\beta}\right)^{-2i(\beta+\tau)}\left(1+\left(\frac{\alpha}{\beta}\right)^2\right)^{-i(\beta+\tau)}\\
    =&\left(\frac{|\beta|}{e}\right)^{-2i(\beta+\tau)}e\left(-\frac{\beta+\tau}{\pi}\log\left(1+\frac{\tau}{\beta}\right)\right)\left(1-i(\beta+\tau)\left(\frac{\alpha}{\beta}\right)^2+O(\beta^{-2})\right)\\
    =&\left(\frac{|\beta|}{e}\right)^{-2i(\beta+\tau)}e\left(-\frac{\beta+\tau}{\pi}\left(\frac{\tau}{\beta}-\frac{\tau^2}{2\beta^2}+O(\beta^{-3})\right)\right)\left(1-\frac{i\alpha^2}{\beta}\right)+O(\beta^{-2})\\
    =&\left(\frac{|\beta|}{e}\right)^{-2i\beta}|\beta|^{-2i\tau}e\left(-\frac{\tau^2}{2\pi\beta}\right)\left(1-\frac{i\alpha^2}{\beta}\right)+O(\beta^{-2}).
\end{align*}
Putting the above into (\ref{GammaRatioTemp}), we get \begin{align*}
    \frac{\Gamma(\alpha-i(\beta+\tau))}{\Gamma(\alpha+i(\beta+\tau))}=\left(\frac{|\beta|}{e}\right)^{-2i\beta}|\beta|^{-2i\tau}e\left(-\frac{\tau^2}{2\pi\beta}+\frac{1-2\alpha}{2\pi}\left(\frac{\pi}{2}-\frac{\alpha}{\beta}\right)\right)\left(1-\frac{i\alpha^2}{\beta}\right)\frac{1+\frac{1}{12(\alpha-i\beta)}}{1+\frac{1}{12(\alpha+i\beta)}}+O\left(\beta^{-2}\right).
\end{align*}
\end{proof}

\begin{remark}
    We only wrote out the first two terms in the asymptotic expansion of $\,\,\Gamma$ to get the above lemma. If we need a better error term we can always write out more terms in the asymptotic expansion.
\end{remark}

\subsection{Mellin transform}

Let $V$ be a $t^\epsilon$-inert function compactly supported on $(t^{-\epsilon},t^\epsilon)$ and let $$\Tilde{V}(s)=\int_0^\infty V(x)x^{s-1}dx$$ be the Mellin transform of $V$. Then repeated integration by parts on the $x$-integral gives the following lemma. \begin{lemma}\label{MellinBound}
    Let $s=\sigma+i\tau$. For any $j\geq 0$, we have \begin{align*}
        \Tilde{V}(s)\ll_{\sigma,j} \left(\frac{t^\epsilon}{1+|\tau|}\right)^j.
    \end{align*}
\end{lemma}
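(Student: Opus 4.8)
The plan is to iterate integration by parts in the $x$-integral, as signalled by the sentence preceding the statement. Since $V$ is compactly supported in $(t^{-\epsilon},t^\epsilon)$, every boundary term will vanish. Writing $x^{s-1}=\frac{1}{s}\frac{d}{dx}x^{s}$ and integrating by parts once gives $\Tilde{V}(s)=-\frac{1}{s}\int_0^\infty V'(x)x^{s}\,dx$; repeating this $j$ times, peeling off one factor $s+k$ at each stage, yields
\[
\Tilde{V}(s)=\frac{(-1)^j}{s(s+1)\cdots(s+j-1)}\int_0^\infty V^{(j)}(x)\,x^{s+j-1}\,dx.
\]

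For the denominator, since $\sigma$ is fixed we have $|s+k|^2=(\sigma+k)^2+\tau^2\geq\tau^2$ for every $k$, while $|s+k|\gg 1$ for all $0\leq k\leq j-1$ (assuming $\sigma$ avoids the relevant negative integers, which we may); hence $|s(s+1)\cdots(s+j-1)|\gg_j(1+|\tau|)^j$. For the numerator, the $t^\epsilon$-inertness of $V$ gives $|V^{(j)}(x)|\ll_j t^{(j+1)\epsilon}x^{-j}$ on the support, so
\[
\left|\int_0^\infty V^{(j)}(x)\,x^{s+j-1}\,dx\right|\ll_j t^{(j+1)\epsilon}\int_{t^{-\epsilon}}^{t^\epsilon}x^{\sigma-1}\,dx\ll_j t^{O_j(\epsilon)},
\]
since the range of integration has length $\ll t^\epsilon$ and $x^{\sigma-1}\ll t^{|\sigma|\epsilon}$ there. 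Combining the two bounds gives $\Tilde{V}(s)\ll_j t^{O_j(\epsilon)}(1+|\tau|)^{-j}$, and rescaling $\epsilon$ (permissible since $j$ is held fixed) puts this in the stated form $\big(\tfrac{t^\epsilon}{1+|\tau|}\big)^j$.

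The argument is entirely routine and there is no genuine obstacle; the only point needing a little care is the bookkeeping of the $t^\epsilon$ factors — both the accumulated powers of $t^\epsilon$ coming from the derivative bounds and the factor $t^{O(\epsilon)}$ produced by integrating $x^{\sigma-1}$ over the dilated support $(t^{-\epsilon},t^\epsilon)$ — but with $j$ fixed these coalesce into a single $t^{O_j(\epsilon)}$ that is absorbed by renaming $\epsilon$.
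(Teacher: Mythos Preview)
Your proof is correct and is precisely the repeated integration by parts on the $x$-integral that the paper indicates just before the lemma; the paper gives no further details beyond that phrase. The bookkeeping you do with the $t^{O_j(\epsilon)}$ factors and the denominator $s(s+1)\cdots(s+j-1)$ is the standard fleshing-out of that one-line justification.
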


\subsection{Oscillatory Integrals}

The first tool to study oscillatory integrals is the stationary phase analysis shown in the following lemma, which is Lemma 3.1 in \cite{inert}.

\begin{lemma}\label{stat}
    Suppose that $w$ is an $X$-inert function compactly supported on $[Z,2Z]$, so that $w^{(j)}(t)\ll \left(Z/X\right)^{-j}$. Also, suppose that $\phi$ is smooth and satisfies $\phi^{(j)}(t)\ll\frac{Y}{Z^j}$ for some $\frac{Y}{X^2}\geq R\geq1$ and all $t$ in the support of $w$. Let \begin{align*}
        I=\int_{-\infty}^\infty w(t)e^{i\phi(t)}dt.
    \end{align*}
    \begin{enumerate}
        \item If $|\phi'(t)|\gg\frac{Y}{Z}$ for all $t$ in the support of $w$, then $I\ll_A ZR^{-A}$ for any $A>0$.
        \item If $\phi''(t)\gg\frac{Y}{Z^2}$ for all $t$ in the support of $w$, and there exists $t_0\in\R$ such that $\phi'(t_0)=0$, then \begin{align*}
            I=\frac{e^{i \phi\left(t_{0}\right)}}{\sqrt{\phi^{\prime \prime}\left(t_{0}\right)}} F\left(t_{0}\right)+O_{A}\left(Z R^{-A}\right)
        \end{align*}
        where $F$ is a $X$-inert function (depending on $A$) supported on $t_0\sim Z$.
    \end{enumerate}
\end{lemma}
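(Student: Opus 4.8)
The plan is to normalize away the scales via the substitution $t = Zu$: after this the amplitude $\tilde w(u):=w(Zu)$ satisfies $\tilde w^{(j)}(u)\ll X^j$ on $u\in[1,2]$, the phase $\tilde\phi(u):=\phi(Zu)$ satisfies $\tilde\phi^{(j)}(u)\ll Y$ for $j\ge1$, and $I=Z\int_\R\tilde w(u)e^{i\tilde\phi(u)}\,du$. For part (1), where in addition $|\tilde\phi'|\gg Y$, I would integrate by parts $A$ times using the operator $D[g]=-\tfrac{d}{du}\!\left(g/(i\tilde\phi')\right)$, whose boundary terms vanish since $\tilde w$ has compact support, so that $I=Z\int D^A[\tilde w](u)e^{i\tilde\phi(u)}\,du$. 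An easy induction on the sizes of $D^j[\tilde w]$ and its derivatives (using $\tilde\phi'\gg Y$, $\tilde\phi^{(j)}\ll Y$ and $\tilde w^{(j)}\ll X^j$) shows that each application of $D$ multiplies the relevant sup-norm by $\ll X/Y$; since $Y\ge X^2R$ gives $X/Y\le 1/(XR)\le 1/R$, this yields $I\ll Z(X/Y)^A\ll ZR^{-A}$, as claimed.

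For part (2) I would first reduce the phase to a pure square. Assume $\phi''(t_0)>0$ (otherwise take complex conjugates); one may also assume $t_0$ lies in the support of $w$ (enlarged by a bounded factor if necessary), since otherwise $|\phi'|\gg Y/Z$ there, part (1) makes $I$ negligible, and $F$ vanishes. Since $\phi''\gg Y/Z^2>0$ on the support, $\phi'$ is strictly monotone and a Morse-type substitution applies: writing $\phi(t_0+s)-\phi(t_0)=\tfrac12\phi''(t_0)s^2H(s)$ with $H(s)=\tfrac{2}{\phi''(t_0)}\int_0^1(1-\theta)\phi''(t_0+\theta s)\,d\theta$, one has $H(0)=1$, $H\asymp1$ and $H^{(k)}(s)\ll Z^{-k}$; putting $u=s\sqrt{H(s)}$ and inverting produces a diffeomorphism $s=\eta(u)$ with $\eta(0)=0$, $\eta'(0)=1$, $\eta^{(k)}(u)\ll Z^{1-k}$, and $\phi(t_0+\eta(u))-\phi(t_0)=\tfrac12\phi''(t_0)u^2$. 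Then $I=e^{i\phi(t_0)}\int W(u)e^{\frac{i}{2}\phi''(t_0)u^2}\,du$ with $W(u):=w(t_0+\eta(u))\eta'(u)$, and a routine computation with the chain rule and Leibniz's rule shows that $W$ is again $X$-inert and supported where $u\sim Z$, with $W(0)=w(t_0)$.

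It remains to evaluate $\int W(u)e^{\frac{i}{2}\phi''(t_0)u^2}\,du$, which I would do by rescaling once more, $u=Zv$, to reach $Z\int\mathcal W(v)e^{i\tilde c v^2}\,dv$ with $\mathcal W$ being $X$-inert on $v\sim1$ and $\tilde c:=\tfrac12\phi''(t_0)Z^2\asymp Y$, and then applying Parseval together with the Fresnel transform $\int_\R e^{i\tilde c v^2-2\pi iv\xi}\,dv=\sqrt{\pi/\tilde c}\,e^{i\pi/4}e^{-i\pi^2\xi^2/\tilde c}$, giving $\int\mathcal W(v)e^{i\tilde c v^2}\,dv=\sqrt{\pi/\tilde c}\,e^{i\pi/4}\int\widehat{\mathcal W}(\xi)e^{-i\pi^2\xi^2/\tilde c}\,d\xi$. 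Since $\widehat{\mathcal W}(\xi)$ is negligible for $|\xi|\gg Xt^\epsilon$, and $\pi^2\xi^2/\tilde c\ll t^\epsilon X^2/Y\le t^\epsilon R^{-1}$ on the remaining range, I would Taylor-expand $e^{-i\pi^2\xi^2/\tilde c}$ to order $A$ and integrate term by term, using $\int\widehat{\mathcal W}(\xi)\xi^{2k}\,d\xi\propto\mathcal W^{(2k)}(0)\ll X^{2k}t^\epsilon$ so that the $k$-th term contributes $\ll(X^2/Y)^k\ll R^{-k}$ and the tail is $\ll R^{-A}$. Unwinding the two rescalings turns $Z\sqrt{\pi/\tilde c}$ into $\sqrt{2\pi/\phi''(t_0)}$ and produces $I=\frac{e^{i\phi(t_0)}}{\sqrt{\phi''(t_0)}}F(t_0)+O(ZR^{-A})$ with $F(t_0)=\sqrt{2\pi}\,e^{i\pi/4}\bigl(w(t_0)+\cdots\bigr)$, which is $X$-inert in $t_0$ and supported on $t_0\sim Z$ because $w$, $\phi''(t_0)$ and $\eta$ all vary slowly on scale $Z$.

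The main difficulty is not analytic — non-stationary phase by repeated integration by parts, and the Gaussian/Fresnel Fourier transform, are completely classical — but rather the bookkeeping: one must check that the Morse substitution $\eta$ carries the $X$-inert class to itself with the \emph{same} parameter $X$, and must track every power of $X$, $Y$, $Z$, $R$ (and the $t^\epsilon$'s implicit in the inert bounds) through the two rescalings so that the error is genuinely $O(ZR^{-A})$ with uniform implied constants. These verifications are routine but delicate, which is why the statement is quoted directly from \cite{inert}.
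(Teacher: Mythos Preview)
Your sketch is a correct outline of the standard proof of this stationary phase lemma: non-stationary phase via repeated integration by parts for part (1), and the Morse-lemma change of variables followed by the Fresnel integral and Taylor expansion for part (2). There is nothing to compare against, however, because the paper does not prove this lemma at all --- it simply quotes it as Lemma~3.1 of \cite{inert} and uses it as a black box. You yourself note this in your final sentence. So your proposal is not ``the same as'' or ``different from'' the paper's proof; it is a (correct) proof where the paper offers none.

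One small remark on your part (1) bookkeeping: from $Y\ge RX^2$ you deduce $X/Y\le 1/(XR)\le 1/R$, which tacitly assumes $X\ge 1$. In the applications here $X$ is always at least $1$ (indeed $X=t^\epsilon$ or larger), so this is harmless, but strictly speaking the inert framework allows $X<1$ and then the sharper bound is $(X^2/Y)^A\le R^{-A}$ rather than $(X/Y)^A$; the induction on $D^j[\tilde w]$ actually produces factors of size $\max(X,1)/Y$ per step, which still gives $R^{-A}$ since $X^2/Y\le R^{-1}$ and $1/Y\le 1/(RX^2)\le R^{-1}$ when $X\ge 1$, or directly $1/Y\le R^{-1}$ when $X\le 1$.
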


\begin{remark}
    ~\begin{itemize}
        \item By applying a smooth dyadic subdivision, the condition of $w$ being compactly supported on $[Z,2Z]$ for Lemma \ref{stat} can be relaxed to $w$ being compactly supported on $(t^{-\epsilon},t^\epsilon)$. We will apply Lemma \ref{stat} with $w$ being a $t^\epsilon$-inert function supported on $(t^{-\epsilon}, t^\epsilon)$.
        \item As described in the notation section, there is a small difference of $t^\epsilon$ in our definition of inert and that of \cite{inert}, but this does not affect the statement or proof of the lemma.
    \end{itemize}
\end{remark}

We also have the following standard simple second derivative bound that requires fewer conditions, see for example \cite[Lemma 5]{2ndDerBound}.

\begin{lemma}\label{2ndDBound}
    Let $V$ be a $t^\epsilon$-inert function compactly supported in $(t^{-\epsilon},t^\epsilon)$ and let $f$ be a real smooth function. Let $r>0$ such that $|f''(x)|>r$ for any $x\in\mathrm{supp}(V)$, then \begin{align*}
        \int_0^\infty V(x)e(f(x))dx\ll \frac{t^\epsilon}{\sqrt{r}}.
    \end{align*}
\end{lemma}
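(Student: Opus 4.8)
The plan is to reduce the statement to the classical van der Corput second derivative estimate for the unweighted integral and then to reinstate the weight $V$ by a single integration by parts. We may assume the support of $V$ is an interval $[a,b]\subset(t^{-\epsilon},t^\epsilon)$, the general case following by running the argument on each component of the support and summing (only the total variation of $V$ will enter at the end). Since $f''$ is continuous and $|f''|\geq r>0$ on $[a,b]$, it has constant sign there, so $f'$ is strictly monotonic on $[a,b]$.

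The first ingredient is the classical fact that, for a real $C^2$ function $g$ on an interval $[a,b]$ with $|g''|\geq r$ throughout, $\bigl|\int_a^b e(g(x))\,dx\bigr|\ll r^{-1/2}$ with an absolute implied constant; this is precisely the estimate quoted in the statement. A quick self-contained argument: if $g'$ has a (necessarily unique) zero $c\in[a,b]$, discard the subinterval $\{\,|x-c|\leq r^{-1/2}\,\}$, whose contribution is trivially $\ll r^{-1/2}$; on the complement $|g'(x)|=|g'(x)-g'(c)|\geq r|x-c|>r^{1/2}$, so integrating by parts once via $e(g)=\tfrac{1}{2\pi i g'}\tfrac{d}{dx}e(g)$ and using that $1/g'$ is monotone on each side of $c$ and of size $\ll r^{-1/2}$ at the relevant endpoints bounds the remaining pieces by $\ll r^{-1/2}$ as well. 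If $g'$ has no zero on $[a,b]$ the same computation applies directly.

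To restore the weight, put $F(x)=\int_a^x e(f(u))\,du$, which by the previous paragraph satisfies $|F(x)|\ll r^{-1/2}$ uniformly for $x\in[a,b]$. Since $V$ and all its derivatives vanish at $a$ and $b$, one integration by parts gives $\int_0^\infty V(x)e(f(x))\,dx=\int_a^b V(x)e(f(x))\,dx=-\int_a^b V'(x)F(x)\,dx\ll r^{-1/2}\int_a^b|V'(x)|\,dx$. By the $t^\epsilon$-inert hypothesis $|V'(x)|\ll t^{2\epsilon}/x$ on the support, and since $a>t^{-\epsilon}$ and $b<t^\epsilon$ we get $\int_a^b|V'(x)|\,dx\ll t^{2\epsilon}\log(b/a)\ll t^{3\epsilon}$; renaming $\epsilon$ yields $\int_0^\infty V(x)e(f(x))\,dx\ll t^\epsilon r^{-1/2}$, as claimed.

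The only real obstacle is the van der Corput estimate of the middle paragraph, where one must isolate the single possible stationary point of $g$ and exploit the monotonicity of $1/g'$ away from it; everything else is routine bookkeeping with the inert bounds and a partial integration. If that estimate is simply cited from the literature, the whole proof reduces to the localisation and the single integration by parts of the last paragraph.
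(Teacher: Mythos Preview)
Your proof is correct. The paper itself does not prove this lemma; it simply records it as a standard second derivative bound and points to the literature (see the sentence preceding the lemma, ``see for example \cite[Lemma 5]{2ndDerBound}''). Your argument is precisely the classical one that such a citation covers: first establish the unweighted van der Corput estimate $\bigl|\int_a^x e(f)\bigr|\ll r^{-1/2}$ uniformly in $x$ by isolating the unique possible stationary point and exploiting the monotonicity of $1/f'$ away from it, and then recover the weight $V$ by a single integration by parts, the inert bound controlling $\int|V'|$ by $t^{O(\epsilon)}$. There is nothing to compare against in the paper beyond the citation, and your write-up is a faithful and self-contained rendering of the cited result.
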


\section{Setup}\label{SectSetup}

The calculations in \cite{taspecttrivial} are focused on Hecke Maass cusp forms.
For the ease of demonstration in this paper, our calculations are primarily focused on holomorphic cusp forms, but we do point out the essential adjustments needed for Hecke Maass cusp forms.

Let $f$ be a fixed level $M$ holomorphic cusp form of weight $k$ with nebentypus $\psi$. By the approximate functional equation in Lemma \ref{AFElemma}, the bound for the Gamma function in Lemma \ref{GammaRatioBound} and a smooth dyadic subdivision, we have \begin{align}\label{ApplyAFE}
    L\left(\frac{1}{2}+it,f\right)\ll t^\epsilon\sup_{1\leq N\ll t^{1+\epsilon}}\frac{S(N)}{\sqrt{N}},
\end{align}
where \begin{align}
    S(N):=\sum_n\la_f(n)n^{-it}V\left(\frac{n}{N}\right)
\end{align}
for some $t^\varepsilon$-inert function $V$ compactly supported on $(1,2)$. This shows that Theorem \ref{thm2} implies Theorem \ref{thm1}.

Note that by (\ref{DeligneBound}) or (\ref{RPavg}) for the Maass form case, we have \begin{align}\label{trivialBound}
    S(N)\ll Nt^\epsilon.
\end{align}

Let $U$ be a fixed smooth function compactly supported on $\left(\frac{1}{2},\frac{5}{2}\right)$ and $U(x)=1$ for $x\in [1,2]$. Then \begin{align*}
    S(N)=\sum_n\la_f(n)V\left(\frac{n}{N}\right)\sum_r r^{-it}U\left(\frac{r}{N}\right)\delta(n=r).
\end{align*}
Now we apply Lemma \ref{trivialdeltalemma} to separate the oscillation in $S(N)$. Let $P,K\geq t^\epsilon$ be parameters, and $\mathcal{P}$ be the set of primes in $[P,2P]$. Let $P^*=|\mathcal{P}|$, and note that $P^*>Pt^{-\epsilon}$. Applying the above delta method together with an average of $p\in\mathcal{P}$, we get \begin{align}\label{FirstSplit}
    S(N)=S^*(N)+S_0(N)+S_1(N),
\end{align}
where \begin{align}\label{S0Bound}
    S_0(N)=\frac{1}{P^*}\sum_{p\in\mathcal{P}}\sum_{p|n}\la_f(n)V\left(\frac{n}{N}\right)\ll \frac{Nt^\epsilon}{P}
\end{align}
by (\ref{DeligneBound}) or (\ref{RPavg}) for the Maass form case,
\begin{align}
    S_1(N)=\frac{1}{P^*}\sum_{p\in \mathcal{P}}\frac{1}{\phi(p)}\int_0^\infty V\left(\nu\right)\sum_{p\nmid n}\la_f(n)n^{iK\nu}V\left(\frac{n}{N}\right)\sum_{p\nmid r}r^{-i(t+K\nu)}U\left(\frac{r}{N}\right)d\nu
\end{align}
and \begin{align}
    S^*(N)=\frac{1}{P^*}\sum_{p\in \mathcal{P}}\frac{1}{\phi(p)}\sumast_{\chi(p)}\int_0^\infty V\left(\nu\right)\sum_{n}\la_f(n)\chi(n)n^{iK\nu}V\left(\frac{n}{N}\right)\sum_{r}\overline{\chi}(r)r^{-i(t+K\nu)}U\left(\frac{r}{N}\right)d\nu.
\end{align}

We first bound $S_1(N)$. By repeated integration by parts on the $\nu$-integral as shown in the proof of Lemma \ref{trivialdeltalemma}, one saves an arbitrary amount unless $|n-r|<\frac{N}{K}t^\epsilon$. Bounding $\la_f(n)$ with (\ref{DeligneBound}) or (\ref{RPavg}) for the Maass form case, we get for $K<Nt^{-\epsilon}$, \begin{align}
    S_1(N)\ll\frac{N^2t^\epsilon}{PK}.
\end{align}
Hence for $K<Nt^{-\epsilon}$, we have \begin{align}\label{SNS*Nrelation}
    S(N)=S^*(N)+O\left(\frac{Nt^\epsilon}{P}+\frac{N^2t^\epsilon}{PK}\right).
\end{align}

\section{Dual summations and Clean Up}\label{SectDualSum}

Now we are left to bound \begin{align}
    S^*(N)=\frac{1}{P^*}\sum_{p\in \mathcal{P}}\frac{1}{\phi(p)}\sumast_{\chi(p)}\int_0^\infty V\left(\nu\right)\sum_{n}\la_f(n)\chi(n)n^{iK\nu}V\left(\frac{n}{N}\right)\sum_{r}\overline{\chi}(r)r^{-i(t+K\nu)}U\left(\frac{r}{N}\right)d\nu.
\end{align}
We perform dual summations by using functional equations of various $L$-functions.

\subsection{\texorpdfstring{$r$}{r}-sum functional equation}

Let $$\Tilde{U}(s)=\int_0^\infty U(x)x^{s-1}dx$$ be the Mellin transform of $U$. By Mellin inversion and the $L$-function representation, we have for any $\sigma>0$,  \begin{align}
    \sum_r\overline{\chi}(r)r^{-i(t+K\nu)}U\left(\frac{r}{N}\right)&=\frac{1}{2\pi i}\int_{(\sigma)}\Tilde{U}(s)N^s\sum_r\overline{\chi}(r)r^{-i(t+K\nu)-s}ds\nonumber\\
    &=\frac{1}{2\pi i}\int_{(\sigma)}\Tilde{U}(s)N^sL(s+i(t+K\nu),\overline{\chi})ds.
\end{align}
Applying the functional equation of the Dirichlet $L$-function in Lemma \ref{FElemma}, the $r$-sum becomes
\begin{align*}
    &\frac{i^{-a}\epsilon_{\overline{\chi}}}{2\pi i}\int_{(\sigma)}\Tilde{U}(s)N^s\left(\frac{\pi}{p}\right)^{s-\frac{1}{2}+i(t+K\nu)}\gamma(s+i(t+K\nu),a)L(1-s-i(t+K\nu),\chi)ds,
\end{align*}
where \begin{align*}
    \gamma(s,a)=\frac{\Gamma\left(\frac{1-s+a}{2}\right)}{\Gamma\left(\frac{s+a}{2}\right)}.
\end{align*}
By Lemma \ref{MellinBound} on $\Tilde{V}$ and Lemma \ref{GammaRatioBound} giving $\left|\gamma(\sigma+i(t+K\nu+\tau),a)\right| \ll (1+|t+K\nu+\tau|)^{-\sigma}$, one saves an arbitrary amount unless $|\im(s)|\ll t^\epsilon$. Hence we can shift the contour to $-M$ for some $M>0$, without hitting any poles since $\chi$ is primitive, to get \begin{align}
    \frac{i^{-a}\epsilon_{\overline{\chi}}}{2\pi i}\left(\frac{\pi}{p}\right)^{-\frac{1}{2}+i(t+K\nu)}\sum_r\chi(r)r^{-1+i(t+K\nu)}\int_{\substack{(-M)\\|\im(s)|\ll t^\epsilon}}\Tilde{U}(s)\left(\frac{\pi Nr}{p}\right)^s\gamma(s+i(t+K\nu),a)ds.
\end{align}
Again bounding $\left|\gamma(\sigma+i(t+K\nu+\tau),a)\right| \ll (1+|t+K\nu+\tau|)^{\sigma}$, one saves an arbitrary amount unless $r\ll \frac{Pt^{1+\epsilon}}{N}$ as we have chosen $K<Nt^{-10\epsilon}<t^{1-\epsilon}$. Similarly, by shifting the contour to the right without hitting any poles since $|\im(s)|\ll t^\epsilon$, one saves an arbitrary amount unless $r\gg\frac{Pt^{1-\epsilon}}{N}$. Restricting the length of $$r\sim_\epsilon\frac{Pt}{N},$$
we shift the contour back to $\frac{1}{2}$, giving us \begin{align}\label{rsumafterfunctional}
    \frac{i^{-a}\epsilon_{\overline{\chi}}}{2\pi i}\left(\frac{\pi}{p}\right)^{-\frac{1}{2}+i(t+K\nu)}\sum_r\chi(r)r^{-1+i(t+K\nu)}V_\epsilon\left(\frac{rN}{Pt}\right)\int_{\substack{\left(\frac{1}{2}\right)\\|\im(s)|\ll t^\epsilon}}\Tilde{U}(s)\left(\frac{\pi Nr}{p}\right)^s\gamma(s+i(t+K\nu),a)ds+O\left(t^{-2020}\right),
\end{align}
where $V_\epsilon$ is a $t^\epsilon$-inert function compactly supported on $(t^{-2\epsilon},t^{2\epsilon})$ and is equal to 1 on $[t^{-\epsilon},t^\epsilon]$.

Applying Lemma \ref{GammaRatioLemma} for $s=\frac{1}{2}+i\tau$ with $|\tau|\ll t^\epsilon$, we have \begin{align*}
    \gamma(s+i(t+K\nu),a)=&\frac{\Gamma\left(\frac{\frac{1}{2}-i(t+K\nu+\tau)+a}{2}\right)}{\Gamma\left(\frac{\frac{1}{2}+i(t+K\nu+\tau)+a}{2}\right)}\\
    =&\left(\frac{t+K\nu}{2e}\right)^{-i(t+K\nu)}\left(\frac{t+K\nu}{2}\right)^{-i\tau}e\left(-\frac{\tau^2}{4\pi(t+K\nu)}\right)W_{\frac{1}{4}+\frac{a}{2}}\left(\frac{t+K\nu}{2}\right)+O(t^{-2}),
\end{align*}
where $W_{\frac{1}{4}+\frac{a}{2}}$ is defined in Lemma \ref{GammaRatioLemma}. By the same lemma, $W_{\frac{1}{4}+\frac{a}{2}}$ is $1$-inert. Hence we have \begin{align}
    &\int_{\substack{\left(\frac{1}{2}\right)\\ |\im(s)|\ll t^\epsilon}}\Tilde{U}(s)\left(\frac{\pi Nr}{p}\right)^s\gamma(s+i(t+K\nu),a)ds\nonumber\\
    =&W_{\frac{1}{4}+\frac{a}{2}}\left(\frac{t+K\nu}{2}\right)\left(\frac{t+K\nu}{2e}\right)^{-i(t+K\nu)}\int_{\substack{\left(\frac{1}{2}\right)\\ |\im(s)|\ll t^\epsilon}}\Tilde{U}(s)\left(\frac{\pi Nr}{p}\right)^s\left(\frac{t+K\nu}{2}\right)^{-i\tau}e\left(-\frac{\tau^2}{4\pi(t+K\nu)}\right)\left(1+O\left(t^{-2}\right)\right)ds\nonumber\\
    =&i\sqrt{\frac{\pi Nr}{p}}W_{\frac{1}{4}+\frac{a}{2}}\left(\frac{t+K\nu}{2}\right)\left(\frac{t+K\nu}{2e}\right)^{-i(t+K\nu)}\int_{|\tau|\ll t^\epsilon}\Tilde{U}\left(\frac{1}{2}+i\tau\right)\left(\frac{2\pi tr}{p(t+K\nu)}\right)^{i\tau}e\left(-\frac{\tau^2}{4\pi(t+K\nu)}\right)d\tau+O\left(\frac{t^\epsilon}{\sqrt{N}t}\right).
\end{align}

Putting this back into the $r$-sum after shifting the contour to $\frac{1}{2}$ in (\ref{rsumafterfunctional}), and bounding the error term trivially with absolute value, we get \begin{align}\label{rsumafterdual}
    \sqrt{N}\epsilon_{\overline{\chi}}\sum_r\frac{\chi(r)}{\sqrt{r}}\left(\frac{p(t+K\nu)}{2\pi er}\right)^{-i(t+K\nu)}V_\epsilon\left(\frac{rN}{Pt}\right)W_{1,p,a}(r,\nu)+O\left(\frac{\sqrt{p}}{\sqrt{N}t^{1-\epsilon}}\right),
\end{align}
where \begin{align*}
    W_{1,p,a}(r,\nu)=\frac{i^{-a}}{2\pi}W_{\frac{1}{4}+\frac{a}{2}}\left(\frac{t+K\nu}{2}\right)\int_{|\tau|< t^\epsilon}\Tilde{U}\left(\frac{1}{2}+i\tau\right)\left(\frac{2\pi tr}{p(t+K\nu)}\right)^{i\tau}e\left(-\frac{\tau^2}{4\pi(t+K\nu)}\right)d\tau
\end{align*}
is $t^\epsilon$-inert in both $r$ and $\nu$.

Putting the $r$-sum back into $S^*(N)$ and bounding the error term with (\ref{DeligneBound}) or (\ref{RPavg}) for the Maass form case, we get \begin{align}\label{S*afterrdual}
    S^*(N)=&\frac{\sqrt{N}}{P^*}\sum_{p\in \mathcal{P}}\frac{1}{\phi(p)}\sumast_{\chi(p)}\epsilon_{\overline{\chi}}\int_0^\infty V\left(\nu\right)\sum_{n}\la_f(n)\chi(n)n^{iK\nu}V\left(\frac{n}{N}\right)\sum_r\frac{\chi(r)}{\sqrt{r}}\left(\frac{p(t+K\nu)}{2\pi er}\right)^{-i(t+K\nu)}V_\epsilon\left(\frac{rN}{Pt}\right)W_{1,p,a}(r,\nu)d\nu\nonumber\\
    &+O\left(\frac{\sqrt{PN}}{t^{1-\epsilon}}\right).
\end{align}

\subsection{\texorpdfstring{$n$}{n}-sum functional equation}

Again by Mellin inversion, we have for any $\sigma>1$,
\begin{align*}
    \sum_n\la_f(n)\chi(n)n^{iK\nu}V\left(\frac{n}{N}\right)&=\frac{1}{2\pi i}\int_{(\sigma)}\Tilde{V}(s)N^s\sum_n\la_f(n)\chi(n)n^{-s+iK\nu}ds\\
    &=\frac{1}{2\pi i}\int_{(\sigma)}\Tilde{V}(s)N^sL(s-iK\nu, f\otimes\chi)ds.
\end{align*}
Since $p\sim P> t^\epsilon$ and $M$ is fixed, we have $(M,p)=1$. Let \begin{align*}
    G_k(s)=\frac{\Gamma\left(1-s+\frac{k-1}{2}\right)}{\Gamma\left(s+\frac{k-1}{2}\right)}.
\end{align*}
Applying the functional equation for the twisted $L$-function in Lemma \ref{FElemma} and shifting the contour to $\sigma<0$, the $n$-sum becomes
\begin{align*}
    &\frac{\epsilon(f)\psi(p)\chi(M)\epsilon_\chi^2}{2\pi i}\int_{(\sigma)}\Tilde{V}(s)N^s\left(\frac{p}{2\pi}\right)^{1-2s+2iK\nu}G_k(s-iK\nu)L(1-s+iK\nu,\overline{f}\otimes\overline{\chi})ds\\
    =&\frac{\epsilon(f)\psi(p)\chi(M)\epsilon_\chi^2}{2\pi i}\left(\frac{p}{2\pi}\right)^{1+2iK\nu}\sum_n\overline{\la_f(n)}\overline{\chi}(n)n^{-1-iK\nu}\int_{(\sigma)}\Tilde{V}(s)N^s\left(\frac{p}{2\pi}\right)^{-2s}n^sG_k(s-iK\nu)ds.
\end{align*}

Applying Lemma \ref{MellinBound} on $\Tilde{V}$ together with $\left|G_k(\sigma+i\tau-iK\nu)\right| \ll (1+|K\nu+\tau|)^{-2\sigma}$ from Lemma \ref{GammaRatioBound}, one saves an arbitrary amount unless $|\im(s)|\ll t^\epsilon$. So by shifting the contour to $-M$ for some $M>0$, one saves an arbitrary amount unless $n\ll \frac{P^2K^2t^\epsilon}{N}$. Similarly, by shifting the contour to the right without hitting any poles since $|\im(s)|\ll t^\epsilon$, one saves an arbitrary amount unless $n\gg\frac{P^2K^2}{Nt^\epsilon}$. Hence we can insert the function $V_\epsilon\left(\frac{nN}{P^2K^2}\right)$ introduced in the previous section with the cost of an arbitrarily small error. Shifting the contour back to $\frac{1}{2}$ and truncating $|\im(s)|\ll t^\epsilon$ by $\Tilde{V}$ with the cost of an arbitrarily small error, the $n$-sum is equal to \begin{align}
    &\frac{\epsilon(f)\psi(p)\chi(M)\epsilon_\chi^2}{2\pi i}\left(\frac{p}{2\pi}\right)^{1+2iK\nu}\sum_n\overline{\la_f(n)}\overline{\chi}(n)n^{-1-iK\nu}V_\epsilon\left(\frac{nN}{P^2K^2}\right)\nonumber\\
    &\times\int_{\substack{\left(\frac{1}{2}\right)\\|\im(s)|\ll t^\epsilon}}\Tilde{V}(s)N^s\left(\frac{p}{2\pi}\right)^{-2s}n^sG_k(s-iK\nu)ds+O\left(t^{-2020}\right).
\end{align}

Now by lemma \ref{GammaRatioLemma}, we get \begin{align*}
    G_k(s-iK\nu)&=\frac{\Gamma\left(\frac{k}{2}-i(-K\nu+\tau)\right)}{\Gamma\left(\frac{k}{2}+i(-K\nu+\tau)\right)}\\
    &=\left(\frac{K\nu}{e}\right)^{2i K\nu}(K\nu)^{-2i\tau}e\left(\frac{\tau^2}{2\pi K\nu}\right)W_{\frac{k}{2}}\left(-K\nu\right)+O\left(K^{-2}\right),
\end{align*}
with $W_{\frac{k}{2}}(-K\nu)$ as defined in Lemma \ref{GammaRatioLemma}. Hence the $s$-integral becomes\begin{align*}
    &\int_{\substack{\left(\frac{1}{2}\right)\\|\im(s)|\ll t^\epsilon}}\Tilde{V}(s)N^s\left(\frac{p}{2\pi}\right)^{-2s}n^sG_k(s-iK\nu)ds\\
    =&\left(\frac{K\nu}{e}\right)^{2i K\nu}W_{\frac{k}{2}}\left(-K\nu\right)\int_{\substack{\left(\frac{1}{2}\right)\\|\im(s)|\ll t^\epsilon}}\Tilde{V}(s)N^s\left(\frac{p}{2\pi}\right)^{-2s}n^s(K\nu)^{-2i\tau}e\left(\frac{\tau^2}{2\pi k\nu}\right)\left(1+O\left(K^{-2}\right)\right)ds\\
    =&\frac{2\pi i\sqrt{Nn}}{p}\left(\frac{K\nu}{e}\right)^{2i K\nu}W_{\frac{k}{2}}\left(-K\nu\right)\int_{|\tau|\ll t^\epsilon}\Tilde{V}\left(\frac{1}{2}+i\tau\right)\left(\frac{4\pi^2nN}{p^2K^2\nu^2}\right)^{i\tau}e\left(\frac{\tau^2}{2\pi k\nu}\right)d\tau+O\left(\frac{\sqrt{N}}{Kt^{\frac{1}{2}+\epsilon}}\right).
\end{align*}
Bounding the error term with \eqref{DeligneBound} or \eqref{RPavg}, the $n$-sum becomes 
\begin{align*}
    &\epsilon(f)\psi(p)\chi(M)\epsilon_\chi^2\sqrt{N}\sum_n\frac{\la_f(n)\overline{\chi}(n)}{\sqrt{n}}\left(\frac{p^2K^2\nu^2}{4\pi^2e^2n}\right)^{iK\nu}V_\epsilon\left(\frac{nN}{P^2K^2}\right)W_{2,p}(n,\nu)+O\left(\frac{P\sqrt{N}}{Kt^{\frac{1}{2}+\epsilon}}\right),
\end{align*}
where \begin{align*}
    W_{2,p}(n,\nu)=\frac{1}{2\pi}W_{\frac{k}{2}}\left(-K\nu\right)\int_{|\tau|< t^\epsilon}\Tilde{V}\left(\frac{1}{2}+i\tau\right)\left(\frac{4\pi^2nN}{p^2K^2\nu^2}\right)^{i\tau}e\left(\frac{\tau^2}{2\pi k\nu}\right)d\tau
\end{align*}
is $t^\epsilon$-inert in $n,\nu$.

Putting this back into $S^*(N)$ in (\ref{S*afterrdual}) and bounding the error term with absolute value, we get \begin{align}\label{S*afterBothDual}
    S^*(N)=&\frac{N\epsilon(f)}{P^*}\sum_{p\in \mathcal{P}}\frac{\psi(p)}{\phi(p)}\sumast_{\chi(p)}\chi(M)\epsilon_\chi^2\epsilon_{\overline{\chi}}\int_0^\infty V\left(\nu\right)\sum_r\frac{\chi(r)}{\sqrt{r}}\left(\frac{p(t+K\nu)}{2\pi er}\right)^{-i(t+K\nu)}V_\epsilon\left(\frac{rN}{Pt}\right)W_{1,p,a}(r,\nu)\nonumber\\
    &\times \sum_n\frac{\overline{\la_f(n)}\overline{\chi}(n)}{\sqrt{n}}\left(\frac{p^2K^2\nu^2}{4\pi^2e^2n}\right)^{iK\nu}V_\epsilon\left(\frac{nN}{P^2K^2}\right)W_{2,p}(n,\nu)d\nu+O\left(\frac{P^\frac{3}{2}\sqrt{N}}{K}+\frac{\sqrt{PN}}{t^{1-\epsilon}}\right)\nonumber\\
    =&\frac{N\epsilon(f)(2\pi e)^{it}}{P^*}\sum_{p\in \mathcal{P}}\frac{\psi(p)}{p^{it}\phi(p)}\sumast_{\chi(p)}\epsilon_\chi\chi(-M)\sum_n\frac{\overline{\la_f(n)}\overline{\chi}(n)}{\sqrt{n}}V_\epsilon\left(\frac{nN}{P^2K^2}\right)\sum_r\chi(r)r^{-\frac{1}{2}+it}V_\epsilon\left(\frac{rN}{Pt}\right)\nonumber\\
    &\times \int_0^\infty V\left(\nu\right)W_{1,p,a}(r,\nu)W_{2,p}(n,\nu)\left(\frac{pK^2\nu^2r}{2\pi en(t+K\nu)}\right)^{iK\nu}(t+K\nu)^{-it}d\nu+O\left(\frac{P^\frac{3}{2}\sqrt{N}}{K}+\frac{\sqrt{PN}}{t^{1-\epsilon}}\right).
\end{align}

\begin{remark}
    For the Maass form case with Laplace eigenvalue $\frac{1}{4}+r^2$, one has to replace $G_k(s)$ by \begin{align*}
        \frac{\Gamma\left(\frac{1-s+\delta_f+a(1-2\delta_f)+ir}{2}\right)\Gamma\left(\frac{1-s+\delta_f+a(1-2\delta_f)-ir}{2}\right)}{\Gamma\left(\frac{s+\delta_f+a(1-2\delta_f)+ir}{2}\right)\Gamma\left(\frac{s+\delta_f+a(1-2\delta_f)-ir}{2}\right)},
    \end{align*}
    where \begin{align*}
        a=\begin{cases}0&\text{ if }\chi(-1)=1\\ 1&\text{ if }\chi(-1)=-1,\end{cases} \quad \text{ and } \quad \delta_f=\begin{cases}0&\text{ if } f \text{ is an even Maass form} \\
        1&\text{ if } f \text{ is an odd Maass form}\end{cases}
    \end{align*}
    as defined before Lemma \ref{FElemma}. While this slightly changes the sum we get, the analysis and steps are the same for the rest of the paper to yield the same bound.
\end{remark}

\subsection{Character sum}

Now we evaluate the $\chi$-sum. Recall that the definition of $a$ depends on $\chi(-1)=\pm1$, and hence we have to split the sum into odd and even character sums. First consider the sum
\begin{align}
    \mathcal{C}_\pm:=&\frac{1}{\phi(p)}\sum_{\chi(p)}\frac{\chi(-1)\pm 1}{2}\epsilon_\chi\overline{\chi}(n)\chi(Mr)\nonumber\\
    =&\frac{1}{2\phi(p)}\sum_{\chi(p)}\epsilon_\chi\left(\overline{\chi}(n)\chi(-Mr)\pm\overline{\chi}(n)\chi(Mr)\right).
\end{align}
Writing $\displaystyle\epsilon_\chi=\frac{1}{\sqrt{p}}\sum_{\alpha(p)}e\left(\frac{\alpha}{p}\right)\chi(\alpha)$, we get
\begin{align*}
    \mathcal{C}_\pm&=\frac{1}{2\phi(p)\sqrt{p}}\sum_{\alpha(p)}e\left(\frac{\alpha}{p}\right)\sum_{\chi(p)}\left(\overline{\chi}(n)\chi(-M\alpha r)\pm\overline{\chi}(n)\chi(M\alpha r)\right)\\
    &=\frac{1}{2\sqrt{p}}\delta(p\nmid n)\sum_{\alpha(p)}e\left(\frac{\alpha}{p}\right)\left(\delta(n\equiv-M\alpha r\Mod{p})\pm\delta(n\equiv M\alpha r\Mod{p})\right)\\
    &=\frac{1}{2\sqrt{p}}\delta(p\nmid n,r)\left(e\left(-\frac{n\overline{Mr}}{p}\right)\pm e\left(\frac{n\overline{Mr}}{p}\right)\right).
\end{align*}

Putting this back into $S^*(N)$ in (\ref{S*afterBothDual}), by adding and subtracting the trivial character we get \begin{align}\label{S*Split}
    S^*(N)=S_0^*(N)+S_1^*(N)-T(N)+O\left(\frac{P^\frac{3}{2}\sqrt{N}}{K}+\frac{\sqrt{PN}}{t^{1-\epsilon}}\right),
\end{align}
where for $a=0,1$, \begin{align}\label{S*aDef}
    S_a^*(N)=&\frac{N\epsilon(f)(2\pi e)^{it}}{2P^*}\sum_{p\in \mathcal{P}}p^{-\frac{1}{2}-it}\sum_{p\nmid n}\frac{\overline{\la_f(n)}}{\sqrt{n}}V_\epsilon\left(\frac{nN}{P^2K^2}\right)\sum_{p\nmid r}r^{-\frac{1}{2}+it}V_\epsilon\left(\frac{rN}{Pt}\right)\left(e\left(-\frac{n\overline{Mr}}{p}\right)+(-1)^a e\left(\frac{n\overline{Mr}}{p}\right)\right)\nonumber\\
    &\times \int_0^\infty V\left(\nu\right)W_{1,p,a}(r,\nu)W_{2,p}(n,\nu)\left(\frac{pK^2\nu^2r}{2\pi en(t+K\nu)}\right)^{iK\nu}(t+K\nu)^{-it}d\nu,
\end{align}
and \begin{align}
    T(N)=&\frac{N\epsilon(f)(2\pi e)^{it}}{P^*}\sum_{p\in \mathcal{P}}\frac{1}{p^{it}\phi(p)}\sum_n\frac{\overline{\la_f(n)}}{\sqrt{n}}V_\epsilon\left(\frac{nN}{P^2K^2}\right)\sum_r r^{-\frac{1}{2}+it}V_\epsilon\left(\frac{rN}{Pt}\right)\nonumber\\
    &\times \int_0^\infty V\left(\nu\right)W_{1,p,0}(r,\nu)W_{2,p}(n,\nu)\left(\frac{pK^2\nu^2r}{2\pi en(t+K\nu)}\right)^{iK\nu}(t+K\nu)^{-it}d\nu.
\end{align}

\subsection{Treatment of \texorpdfstring{$T(N)$}{T(N)}}

Consider the $n$-sum in $T(N)$, \begin{align*}
    \sum_n\overline{\la_f(n)}n^{-\frac{1}{2}-iK\nu}V_\epsilon\left(\frac{nN}{P^2K^2}\right)W_{2,p}(n,\nu).
\end{align*}
By Mellin inversion and the $L$-function representation, we have for any $\sigma>1$, \begin{align}\label{S1nsum}
    \sum_{n}\overline{\la_f(n)}n^{-\frac{1}{2}-iK\nu}V\left(\frac{nN}{P^2K^2}\right)=&\frac{1}{2\pi i}\int_{(\sigma)}\Tilde{V}(s)\left(\frac{P^2K^2}{N}\right)^s\sum_n\overline{\la_f(n)}n^{-\frac{1}{2}-iK\nu-s}ds\nonumber\\
    =&\frac{1}{2\pi i}\int_{(\sigma)}\Tilde{V}(s)\left(\frac{P^2K^2}{N}\right)^sL\left(\overline{f},s+\frac{1}{2}+iK\nu\right)ds.
\end{align}
Applying Lemma \ref{MellinBound} on $\Tilde{V}$ gives an arbitrary savings unless $|\im(s)|\ll t^\epsilon$, and hence we can shift the contour to $-M$ for some $M>0$ and apply the functional equation to see that (\ref{S1nsum}) is equal to \begin{align}
    &\frac{\overline{\epsilon(f)}}{2\pi i}\int_{(-M)}\Tilde{V}(s)(2\pi)^{2s+2iK\nu}\left(\frac{P^2K^2}{N}\right)^s\frac{\Gamma\left(\frac{1}{2}-s-iK\nu+\frac{k-1}{2}\right)}{\Gamma\left(s+\frac{1}{2}+iK\nu+\frac{k-1}{2}\right)}L\left(f,\frac{1}{2}-s-iK\nu\right)ds\nonumber\\
    =&\frac{\overline{\epsilon(f)}}{2\pi i}\int_{(-M)}\Tilde{V}(s)(2\pi)^{2s+2iK\nu}\left(\frac{P^2K^2}{N}\right)^s\frac{\Gamma\left(\frac{1}{2}-s-iK\nu+\frac{k-1}{2}\right)}{\Gamma\left(s+\frac{1}{2}+iK\nu+\frac{k-1}{2}\right)}\sum_n\la_f(n)n^{s-iK\nu-1}ds.
\end{align}
We have $\left|\frac{\Gamma\left(\frac{1}{2}-s+iK\nu+\frac{k-1}{2}\right)}{\Gamma\left(s+\frac{1}{2}+iK\nu+\frac{k-1}{2}\right)}\right| \ll 1+|K\nu+\tau|^{2\re(s)}$, and repeated integration by parts on $\Tilde{V}$ gives us an arbitrary savings unless $|\tau|\ll t^\epsilon$. Therefore, one saves an arbitrary amount by taking $M$ large enough unless \begin{align*}
    n\ll\frac{Nt^\epsilon}{P^2}.
\end{align*}
Hence by choosing $P^2>Nt^\epsilon$, one saves an arbitrary amount, that is, \begin{align}\label{TBound}
    T(N)=O\left(t^{-2020}\right).
\end{align}

\begin{remark}
    One can directly apply Voronoi summation to get the same result here. But we intentionally avoid doing so for the purpose of this paper.
\end{remark}

\begin{remark}
    For the Maass form case, one has to adjust the Gamma function in a similar fashion mentioned in the end of Section 6.2, and then the same analysis goes through.
\end{remark}

\subsection{Analysing the \texorpdfstring{$\nu$}{nu}-integral}

Now we analyse the $\nu$-integral given by
\begin{align}\label{IDef}
    I(p,n,r):=&\int_0^\infty V\left(\nu\right)W_{1,p,a}(r,\nu)W_{2,p}(n,\nu)\left(\frac{pK^2\nu^2r}{2\pi en(t+K\nu)}\right)^{iK\nu}(t+K\nu)^{-it}d\nu=\int_0^\infty g(\nu)e(h(\nu))d\nu
\end{align}
where \begin{align*}
    g(\nu)&=V\left(\nu\right)W_{1,p,a}(r,\nu)W_{2,p}(n,\nu)\\
    2\pi h(t)&=2K\nu\log(\nu)-K\nu\log(t+K\nu)+K\nu\log\left(\frac{pK^2r}{2\pi en}\right)-t\log(t+K\nu).
\end{align*}
Now \begin{align*}
    2\pi h'(\nu)&=2K\log(\nu)+2K-K\log(t+K\nu)-\frac{K^2\nu}{t+K\nu}+K\log\left(\frac{pK^2r}{2\pi en}\right)-\frac{Kt}{t+K\nu}\\
    &=2K\log(\nu)-K\log(t+K\nu)+K\log\left(\frac{pK^2r}{2\pi n}\right)\\
    2\pi h''(\nu)&=\frac{K(2t+K\nu)}{\nu(t+K\nu)}\\
    h^{(j)}(\nu)&\ll_j K
\end{align*}
for $j\geq 2$. Choosing $K<t^{1-\varepsilon}$, we have $h''(\nu)\sim K$. Solving $h'(v_0)=0$, we get that the stationary phase is \begin{align*}
    \nu_0=&\frac{\pi n}{pKr}\left(\sqrt{\frac{2prt}{\pi n}+1}+1\right)\\
    h(\nu_0)=&-K\nu_0-t\log(t+K\nu_0).
\end{align*}
By Lemma \ref{stat}, there exists some $t^\varepsilon$-inert function $F$ such that, \begin{align*}
    I(p,n,r)=&V(\nu_0)W_{1,p,a}(r,\nu_0)W_{2,p}(n,\nu_0)\frac{F(\nu_0)}{\sqrt{h''(\nu_0)}}t^{-it}\nonumber\\
    &\times e\left(-\frac{n}{2pr}\left(\sqrt{\frac{2prt}{\pi n}+1}+1\right)-\frac{t}{2\pi}\log\left(1+\frac{\pi n}{prt}\left(\sqrt{\frac{2prt}{\pi n}+1}+1\right)\right)\right)+O(t^{-2020}).
\end{align*}
For the ease of later computation, we rewrite \begin{align*}
    e\left(-\frac{n}{2pr}\left(\sqrt{\frac{2prt}{\pi n}+1}+1\right)\right)=&e\left(-\sqrt{\frac{nt}{2\pi pr}}\sqrt{1+\frac{\pi n}{2prt}}-\frac{n}{2pr}\right)\\
    =&e\left(-\sqrt{\frac{nt}{2\pi pr}}-\frac{n}{2pr}\right)U_{1,p}(n,r),
\end{align*}
where \begin{align*}
    U_{1,p}(n,r)=e\left(-\sqrt{\frac{nt}{2\pi pr}}\left(\sqrt{1+\frac{\pi n}{2prt}}-1\right)\right)
\end{align*}
is $\frac{K^3}{t^{2-\epsilon}}$-inert, and by Taylor expansions, \begin{align*}
    e\left(-\frac{t}{2\pi}\log\left(1+\frac{\pi n}{prt}\left(\sqrt{\frac{2prt}{\pi n}+1}+1\right)\right)\right)=&e\left(-\frac{t}{2\pi}\left(\sqrt{\frac{2\pi n}{prt}}\sqrt{1+\frac{\pi n}{2prt}}+\frac{\pi n}{prt}-\frac{1}{2}\left(\sqrt{\frac{2\pi n}{prt}}\sqrt{1+\frac{\pi n}{2prt}}+\frac{\pi n}{prt}\right)^2\right.\right.\\
    &\left.\left.+\sum_{j=3}^\infty\frac{(-1)^{j-1}}{j}\left(\frac{\pi n}{prt}\left(\sqrt{\frac{2prt}{\pi n}+1}+1\right)\right)^j\right)\right)\\
    =&e\left(-\sqrt{\frac{nt}{2\pi pr}}\right)U_{2,p}(n,r),
\end{align*}
where \begin{align*}
    U_{2,p}(n,r)=e\left(-\frac{t}{2\pi}\left(\sum_{j=2}^\infty\frac{(-1)^{j-1}}{j}\left(\frac{\pi n}{prt}\left(\sqrt{\frac{2prt}{\pi n}+1}+1\right)\right)^j+\frac{\pi n}{prt}\right)\right)
\end{align*}
is $\frac{K^3}{t^{2-\epsilon}}$-inert. Putting this back into $I(p,n,r)$ in (\ref{IDef}), we have \begin{align}
    I(p,n,r)=&V(\nu_0)W_{1,p,a}(r,\nu_0)W_{2,p}(n,\nu_0)\frac{F(\nu_0)}{\sqrt{h''(\nu_0)}}t^{-it}\nonumber\\
    &\times e\left(-\sqrt{\frac{2nt}{\pi pr}}-\frac{n}{2pr}\right)U_{1,p}(n,r)U_{2,p}(n,r)+O(t^{-2020}).
\end{align}

Putting this back into $S^*_a(N)$ in (\ref{S*aDef}), we get \begin{align}
    S_a^*(N)=&\frac{N^2\epsilon(f)(2\pi e)^{it}}{2P^2P^*K^\frac{3}{2}t^{\frac{1}{2}+it}}\sum_{p\in \mathcal{P}}\frac{\psi(p)}{p^{it}}\sum_{p\nmid n}\overline{\la_f(n)}V_\epsilon\left(\frac{nN}{P^2K^2}\right)\sum_{p\nmid r}r^{it}V_\epsilon\left(\frac{rN}{Pt}\right)\left(e\left(-\frac{n\overline{Mr}}{p}\right)+(-1)^a e\left(\frac{n\overline{Mr}}{p}\right)\right)\nonumber\\
    &\times e\left(-\sqrt{\frac{2nt}{\pi pr}}-\frac{n}{2pr}\right)W_p(n,r)+O(t^{-2020}), 
\end{align}
where \begin{align}
    W_p(n,r)=\frac{P^2K^\frac{3}{2}\sqrt{t}}{N\sqrt{pnr}}V(\nu_0)W_{1,p,a}(r,\nu_0)W_{2,p}(n,\nu_0)U_{1,p}(n,r)U_{2,p}(n,r)\frac{F(\nu_0)}{\sqrt{f''(\nu_0)}}.
\end{align}
Differentiating, we see that $W_p(n,r)$ is bounded by $t^\epsilon$ and $\left(1+\frac{K^3}{t^2}\right)t^\epsilon$-inert in $n,r$.

At this point, bounding everything with absolute value yields \begin{align*}
    S^*_a(N)\ll\frac{N^2t^\epsilon}{P^3K^\frac{3}{2}\sqrt{t}}P\frac{P^2K^2}{N}\frac{Pt}{N}=P\sqrt{K}t^{\frac{1}{2}+\epsilon},
\end{align*}
which is not good enough yet as we have the constraint $PK>Nt^\epsilon$.

\section{The Cauchy-Schwarz inequality}\label{SectCS}

Applying the Cauchy-Schwarz inequality and taking out the $n$-sum, we get \begin{align}
    S_a^*(N)^2\leq&\sum_\pm\frac{N^3}{P^4Kt^{1-\epsilon}}\sum_nV_\epsilon\left(\frac{nN}{P^2K^2}\right)\left|\sum_{p\in \mathcal{P}}\frac{\psi(p)}{p^{it}}\sum_{p\nmid r}r^{it}V_\epsilon\left(\frac{rN}{Pt}\right)e\left(\pm\frac{n\overline{Mr}}{p}-\sqrt{\frac{2nt}{\pi pr}}-\frac{n}{2pr}\right)W_p(n,r)\right|^2\nonumber\\
    =&\sum_\pm\frac{N^3}{P^4Kt^{1-\epsilon}}\sum_nV_\epsilon\left(\frac{nN}{P^2K^2}\right)\sum_{p_1\in \mathcal{P}}\sum_{p_2\in \mathcal{P}}\psi(p_1)\overline{\psi(p_2)}\left(\frac{p_2}{p_1}\right)^{it}\sum_{p_1\nmid r_1}\sum_{p_2\nmid r_2}\left(\frac{r_1}{r_2}\right)^{it}V_\epsilon\left(\frac{r_1N}{Pt}\right)V_\epsilon\left(\frac{r_2N}{Pt}\right)\nonumber\\
    &\times e\left(\pm\frac{n\overline{Mr_1}}{p_1}\mp\frac{n\overline{Mr_2}}{p_2}-\sqrt{\frac{2nt}{\pi p_1r_1}}-\frac{n}{2p_1r_1}+\sqrt{\frac{2nt}{\pi p_2r_2}}+\frac{n}{2p_2r_2}\right)W_{p_1}(n,r_1)\overline{W_{p_2}(n,r_2)}.
\end{align}
Performing Poisson summation on the $n$-sum, we get \begin{align}
    &\sum_nV_\epsilon\left(\frac{nN}{P^2K^2}\right)e\left(\pm\frac{n\overline{Mr_1}}{p_1}\mp\frac{n\overline{Mr_2}}{p_2}-\sqrt{\frac{2nt}{\pi p_1r_1}}-\frac{n}{2p_1r_1}+\sqrt{\frac{2nt}{\pi p_2r_2}}+\frac{n}{2p_2r_2}\right)W_{p_1}(n,r_1)\overline{W_{p_2}(n,r_2)}\nonumber\\
    =&\sum_{\gamma\Mod{p_1p_2}}e\left(\pm\frac{\gamma(\overline{Mr_1}p_2-\overline{Mr_2}p_1)}{p_1p_2}\right)\sum_n\int_0^\infty V_\epsilon\left(\frac{(\gamma+p_1p_2y)N}{P^2K^2}\right)W_{p_1}(\gamma+p_1p_2y,r_1)\overline{W_{p_2}(\gamma+p_1p_2y,r_2)}\nonumber\\
    &\times e\left(-\sqrt{\frac{2(\gamma+p_1p_2y)t}{\pi p_1r_1}}-\frac{\gamma+p_1p_2y}{2p_1r_1}+\sqrt{\frac{2(\gamma+p_1p_2y)t}{\pi p_2r_2}}+\frac{\gamma+p_1p_2y}{2p_2r_2}-ny\right)dy\nonumber\\
    =&\frac{P^2K^2}{p_1p_2N}\sum_n\sum_{\gamma\Mod{p_1p_2}}e\left(\frac{\gamma(\overline{Mr_1}p_2-\overline{Mr_2}p_1\pm n)}{p_1p_2}\right)\int_0^\infty V_\epsilon\left(w\right)W_{p_1}\left(\frac{P^2K^2w}{N},r_1\right)\overline{W_{p_2}\left(\frac{P^2K^2w}{N},r_2\right)}\nonumber\\
    &\times e\left(-\sqrt{\frac{2P^2K^2tw}{\pi Np_1r_1}}-\frac{P^2K^2w}{2Np_1r_1}+\sqrt{\frac{2P^2K^2tw}{\pi Np_2r_2}}+\frac{P^2K^2w}{2Np_2r_2}-\frac{nP^2K^2w}{p_1p_2N}\right)dw\nonumber\\
    =&\frac{P^2K^2}{N}\sum_n\delta\left(\pm n\equiv \overline{Mr_2}p_1-\overline{Mr_1}p_2\Mod{p_1p_2}\right)J_{p_1,p_2}(n,r_1,r_2)
\end{align}
where \begin{align}
    J_{p_1,p_2}(n,r_1,r_2)=&\int_0^\infty V_\epsilon\left(w\right)W_{p_1}\left(\frac{P^2K^2w}{N},r_1\right)\overline{W_{p_2}\left(\frac{P^2K^2w}{N},r_2\right)}\nonumber\\
    &\times e\left(-\sqrt{\frac{2P^2K^2tw}{\pi Np_1r_1}}-\frac{P^2K^2w}{2Np_1r_1}+\sqrt{\frac{2P^2K^2tw}{\pi Np_2r_2}}+\frac{P^2K^2w}{2Np_2r_2}-\frac{nP^2K^2w}{p_1p_2N}\right)dw.
\end{align}

Now we analyse the integral $J_{p_1,p_2}(n,r_1,r_2)$. We will choose $K<t^{\frac{2}{3}-\epsilon}$, which implies $W_{p_1}, W_{p_2}$ are both $t^\epsilon$-inert functions. By repeated integration by parts, one saves an arbitrary amount unless \begin{align*}
    |n|\ll\frac{Nt^\epsilon}{K}.
\end{align*}
Hence we get \begin{align}\label{S*AfterCS}
    S_a^*(N)^2\leq& \sum_\pm\frac{N^2K}{P^2t^{1-\epsilon}}\sum_{|n|\ll\frac{Nt^\epsilon}{K}}\sum_{p_1\in \mathcal{P}}\sum_{p_2\in \mathcal{P}}\psi(p_1)\overline{\psi(p_2)}\left(\frac{p_2}{p_1}\right)^{it}\sum_{p_1\nmid r_1}\sum_{p_2\nmid r_2}V_\epsilon\left(\frac{r_1N}{Pt}\right)V_\epsilon\left(\frac{r_2N}{Pt}\right)\left(\frac{r_1}{r_2}\right)^{it}\nonumber\\
    &\times \delta\left(\pm n\equiv \overline{Mr_2}p_1-\overline{Mr_1}p_2\Mod{p_1p_2}\right)J_{p_1,p_2}(n,r_1,r_2)+O(t^{-2020}).
\end{align}
Let \begin{align}
    H(w)=-\sqrt{\frac{2P^2K^2tw}{\pi Np_1r_1}}-\frac{P^2K^2w}{2Np_1r_1}+\sqrt{\frac{2P^2K^2tw}{\pi Np_2r_2}}+\frac{P^2K^2w}{2Np_2r_2}-\frac{nP^2K^2w}{p_1p_2N}
\end{align}
be the phase function. Differentiating, we get \begin{align*}
    H'(w)=&-\sqrt{\frac{P^2K^2t}{2\pi Np_1r_1w}}-\frac{P^2K^2}{2Np_1r_1}+\sqrt{\frac{P^2K^2t}{2\pi Np_2r_2w}}+\frac{P^2K^2}{2Np_2r_2}-\frac{nP^2K^2}{p_1p_2N}\\
    H''(w)=&PK\sqrt{\frac{t}{8\pi Np_1p_2r_1r_2w^3}}\left(\frac{p_2r_2-p_1r_1}{\sqrt{p_1r_1}+\sqrt{p_2r_2}}\right)\gg\frac{NK}{P^2t^{1-\epsilon}}|p_2r_2-p_1r_1|.
\end{align*}
Thus for $K<t^{\frac{2}{3}-\epsilon}$, the second derivative bound in Lemma \ref{2ndDBound} gives \begin{align}\label{JBound}
    J_{p_1,p_2}(n,r_1,r_2)\ll\min\left\{1,\sqrt{\frac{P^2t}{NK|p_2r_2-p_1r_1|}}\right\}t^\epsilon.
\end{align}

Let $\mathcal{D}$ be the contribution of the case $p_1=p_2=p$ to the bound of $S_a^*(N)^2$ in (\ref{S*AfterCS}), that is, \begin{align}
    \mathcal{D}:=&\sum_\pm\frac{N^2K}{P^2t^{1-\epsilon}}\sum_{|n|\leq\frac{Nt^\epsilon}{K}}\sum_{p\in \mathcal{P}}\sum_{p\nmid r_1}\sum_{p\nmid r_2}V_\epsilon\left(\frac{r_1N}{Pt}\right)V_\epsilon\left(\frac{r_2N}{Pt}\right)\left(\frac{r_1}{r_2}\right)^{it}\nonumber\\
    &\times \delta\left(\pm n\equiv \overline{Mr_2}p-\overline{Mr_1}p\Mod{p^2}\right)J_{p,p}(n,r_1,r_2).
\end{align}
The congruence condition implies $p|n$ and $n\equiv \overline{Mr_2}-\overline{Mr_1}\Mod{p}$. Since $PK>Nt^\epsilon$, this implies $n=0$. Hence the congruence condition reduces to $r_1\equiv r_2\Mod{p}$. Further splitting $\mathcal{D}$ into whether $r_1=r_2$ or not, we get \begin{align}
    \mathcal{D}=\mathcal{D}_0+\mathcal{D}_1,
\end{align}
where \begin{align}
    \mathcal{D}_0=\frac{N^2K}{P^2t^{1-\epsilon}}\sum_{p\in \mathcal{P}}\sum_{p\nmid r}V_\epsilon\left(\frac{rN}{Pt}\right)^2J_{p,p}(0,r,r)\ll NKt^\epsilon
\end{align}
by (\ref{JBound}), and \begin{align}
    \mathcal{D}_1=\frac{N^2K}{P^2t^{1-\epsilon}}\sum_{p\in \mathcal{P}}\sum_{p\nmid r_1}\sum_{\substack{p\nmid r_2\\ r_1\equiv r_2\Mod{p}}}\left(\frac{r_1}{r_2}\right)^{it}V_\epsilon\left(\frac{r_1N}{Pt}\right)V_\epsilon\left(\frac{r_2N}{Pt}\right)J_{p,p}(0,r_1,r_2).
\end{align}
Now for $r_1\neq r_2$, in order to use (\ref{JBound}) to bound the $J_{p_1,p_2}(n,r_1,r_2)$, we split the difference $\left|\frac{r_2-r_1}{p}\right|$ into dyadic segments to get \begin{align}
    \mathcal{D}_1=&\frac{N^2K}{P^2t^{1-\epsilon}}\sum_{0\leq\eta<t^\epsilon}\sum_{p\in \mathcal{P}}\sum_{p\nmid r_1}\sum_{\substack{p\nmid r_2\\ r_1\equiv r_2\Mod{p}}}\left(\frac{r_1}{r_2}\right)^{it}V_\epsilon\left(\frac{r_1N}{Pt}\right)V_\epsilon\left(\frac{r_2N}{Pt}\right)J_{p,p}(0,r_1,r_2)\delta\left(p2^\eta<|r_2-r_1|\leq p2^{\eta+1}\right)\nonumber\\
    \ll&\frac{N^2K}{P^2t^{1-\epsilon}}\sup_{0\leq\eta<t^\epsilon}\sum_{p\in \mathcal{P}}\sum_{p\nmid r_1}\sum_{\substack{p\nmid r_2\neq r_1\\ r_1\equiv r_2\Mod{p}}}V_\epsilon\left(\frac{r_1N}{Pt}\right)V_\epsilon\left(\frac{r_2N}{Pt}\right)\sqrt{\frac{t}{NK2^\eta}}\delta\left(p2^\eta<|r_2-r_1|\leq p2^{\eta+1}\right)\nonumber\\
    \ll&\frac{N^2K}{P^2t^{1-\epsilon}}\sup_{0\leq\eta<t^\epsilon}P\frac{Pt}{N}\min\left\{\frac{t}{N},2^\eta\right\}\sqrt{\frac{t}{NK2^\eta}}\ll \sqrt{K}t^{1+\epsilon}.
\end{align}

Let $\mathcal{O}$ be the contribution of the case $p_1\neq p_2$ to the bound of $S_a^*(N)^2$ in (\ref{S*AfterCS}), that is, \begin{align}
    \mathcal{O}:=&\sum_\pm\frac{N^2K}{P^2t^{1-\epsilon}}\sum_{|n|\leq\frac{Nt^\epsilon}{K}}\sum_{p_1\in \mathcal{P}}\sum_{p_2\neq p_1\in \mathcal{P}}\psi(p_1)\overline{\psi(p_2)}\left(\frac{p_2}{p_1}\right)^{it}\sum_{p_1\nmid r_1}\sum_{p_2\nmid r_2}V_\epsilon\left(\frac{r_1N}{Pt}\right)V_\epsilon\left(\frac{r_2N}{Pt}\right)\left(\frac{r_1}{r_2}\right)^{it}\nonumber\\
    &\times \delta\left(\pm n\equiv \overline{Mr_2}p_1-\overline{Mr_1}p_2\Mod{p_1p_2}\right)J_{p_1,p_2}(n,r_1,r_2)
\end{align}
As $p_1\neq p_2$, we have $(p_1,p_2)=1$. The congruence condition implies that $n\neq 0$, $p_1r_1\neq p_2r_2$, $r_1\equiv \mp\overline{Mn}p_2\Mod{p_1}$ and $r_2\equiv \pm\overline{Mn}p_1\Mod{p_2}$. In order to use (\ref{JBound}) to bound the $J_{p_1,p_2}(n,r_1,r_2)$, we split the difference $|p_2r_2-p_1r_1|$ into dyadic segments, \begin{align}
    \mathcal{O}=&\frac{N^2K}{P^2t^{1-\epsilon}}\sum_\pm\sum_{0\leq\eta<t^\epsilon}\sum_{0<|n|\leq\frac{Nt^\epsilon}{K}}\sum_{p_1\in \mathcal{P}}\sum_{p_2\neq p_1\in \mathcal{P}}\psi(p_1)\overline{\psi(p_2)}\left(\frac{p_2}{p_1}\right)^{it}\sum_{p_1\nmid r_1}\sum_{p_2\nmid r_2}\left(\frac{r_1}{r_2}\right)^{it}V_\epsilon\left(\frac{r_1N}{Pt}\right)V_\epsilon\left(\frac{r_2N}{Pt}\right)\nonumber\\
    &\times\delta\left(r_1\equiv \mp\overline{Mn}p_2\Mod{p_1},r_2\equiv \pm\overline{Mn}p_1\Mod{p_2}, 2^\eta\leq |p_2r_2-p_1r_1|<2^{\eta+1}\right)J_{p_1,p_2}(n,r_1,r_2).
\end{align}
Applying (\ref{JBound}), we get \begin{align}
    \mathcal{O}\ll&\frac{N^2K}{P^2t^{1-\epsilon}}\sum_\pm\sum_{0\leq\eta<t^\epsilon}\sum_{0<|n|\leq\frac{Nt^\epsilon}{K}}\sum_{p_1\in \mathcal{P}}\sum_{p_2\neq p_1\in \mathcal{P}}\sum_{r_1\equiv \mp\overline{Mn}p_2\Mod{p_1}}\sum_{\substack{r_2\equiv \pm\overline{Mn}p_1\Mod{p_2}\\2^\eta\leq |p_2r_2-p_1r_1|<2^{\eta+1}}}V_\epsilon\left(\frac{r_1N}{Pt}\right)V_\epsilon\left(\frac{r_2N}{Pt}\right)\nonumber\\
    &\times \min\left\{1,\sqrt{\frac{P^2t}{NK2^\eta}}\right\}\nonumber\\
    \ll &\frac{N^2K}{P^2t^{1-\epsilon}}\sup_{0\leq\eta<t^\epsilon}\frac{N}{K}P^2\frac{t}{N}\min\left\{\frac{2^\eta}{P^2},\frac{t}{N}\right\}\min\left\{1,\sqrt{\frac{P^2t}{NK2^\eta}}\right\}\ll\frac{Nt^{1+\epsilon}}{\sqrt{K}}.
\end{align}

\section{Final Bound}\label{SectFinal}

Combining the above cases $\mathcal{D}_0,\mathcal{D}_1$ and $\mathcal{O}$, and collecting all the constraints we have set for $P$ and $K$, we get for $PK>Nt^\epsilon$, $P^2>Nt^\epsilon$, $K<t^{\frac{2}{3}+\epsilon}$, $K<Nt^{-\epsilon}$,
\begin{align}
    S_a^{*2}(N)\ll NKt^\epsilon+\sqrt{K}t^{1+\epsilon}+\frac{Nt^{1+\epsilon}}{\sqrt{K}}.
\end{align}
Putting this bound and (\ref{TBound}) into (\ref{S*Split}), we get \begin{align}
    S^*(N)\ll \left(\sqrt{NK}+K^\frac{1}{4}\sqrt{t}+\frac{\sqrt{Nt}}{K^\frac{1}{4}}+\frac{P^\frac{3}{2}\sqrt{N}}{K}+\frac{\sqrt{PN}}{t}\right)t^\epsilon.
\end{align}
Putting this bound into (\ref{SNS*Nrelation}), we get \begin{align}
    S(N)\ll \left(\frac{N}{P}+\frac{N^2}{PK}+\sqrt{NK}+K^\frac{1}{4}\sqrt{t}+\frac{\sqrt{Nt}}{K^\frac{1}{4}}+\frac{P^\frac{3}{2}\sqrt{N}}{K}+\frac{\sqrt{PN}}{t}\right)t^\epsilon.
\end{align}
We will use this bound when $N>t^{\frac{2}{3}+\epsilon}$, and in this case the optimal choices of the parameters under the constraints are \begin{align*}
    P=t^{\frac{1}{2}+\epsilon} \text{ and } K=t^{\frac{2}{3}-\epsilon},
\end{align*}
giving us for $N>t^{\frac{2}{3}+\epsilon}$, \begin{align}
    S(N)\ll \sqrt{N}t^{\frac{1}{3}+\epsilon}.
\end{align}
Combining with the bound (\ref{trivialBound}) for the case $N<t^{\frac{2}{3}+\epsilon}$, we get \begin{align}
    \sup_{N\leq t^{1+\epsilon}}S(N)\ll \min\left\{Nt^\epsilon, \sqrt{N}t^{\frac{1}{3}+\epsilon}\right\}.
\end{align}
This concludes Theorem \ref{thm2}, and hence concludes Theorem \ref{thm1} by (\ref{ApplyAFE}).

\begin{remark}
    The constraints $P^2>Nt^\epsilon$, $K<t^{\frac{2}{3}+\epsilon}$ can be relaxed and the error terms $\frac{N^2t^\epsilon}{PK}, \frac{P^\frac{3}{2}\sqrt{N}}{K}, \frac{\sqrt{PN}}{t}$ can be improved by writing out more terms when doing asymptotic expansions. However, they are not affecting the final bound as the main term $\sqrt{NK}+K^\frac{1}{4}\sqrt{t}+\frac{\sqrt{Nt}}{K^\frac{1}{4}}$ dominates all these terms and constraints.
\end{remark}

\printbibliography

\end{document}